\newtheorem{theorem}{Theorem}[section]
\newtheorem{corollary}[theorem]{Corollary}
\newtheorem{proposition}[theorem]{Proposition}
\theoremstyle{definition}
\newtheorem{definition}[theorem]{Definition}
\newtheorem{example}[theorem]{Example}
\newtheorem{remark}[theorem]{Remark}
\def\r{\mathbb R}
\def\h{\mathbb H}
\def\l{\mathbb L}
\def\s{\mathbb S}
\begin{document}

\title{Extension of a problem of Euler in    Lorentz-Minkowski plane}
 
\author{Muhittin Evren Aydin}
\address{Department of Mathematics, Faculty of Science, Firat University, Elazig,  23200 Turkey}
\email{meaydin@firat.edu.tr}
\author{Rafael L\'opez}
\address{Departamento de Geometr\'{\i}a y Topolog\'{\i}a Universidad de Granada 18071 Granada, Spain}
\email{rcamino@ugr.es}
  \subjclass{53A10 , 49Q05 , 35A15}
\keywords{moment of inertia, Lorentz-Minkowski plane, spacelike, timelike, lightlike cone}
\begin{abstract}
In this paper we study curves  in Lorentz-Minkowski space $\l^{2}$ that are critical points of the moment of inertia with respect to the origin. This extends a problem posed by Euler in the Lorentzian setting.  We obtain explicit solutions for stationary curves in $\l^2$ distinguishing if the curve is spacelike or timelike. We also give a method to carry stationary spacelike curves into stationary timelike curves and vice versa via symmetries and inversions about the lightlike cone. Finally, we solve the problem of maximizing the energy among all spacelike curves joining two given points which are collinear with the origin.
\end{abstract}

\maketitle
\section{Introduction of the problem}

In 1744, Euler proposed the study of curves $\gamma$ in the Euclidean plane that minimize or maximize the energy
\begin{equation}\label{Eu1}
E_{\alpha}[\gamma]=\int_\gamma |\gamma(s)|^\alpha\, ds,
\end{equation}
where $\alpha\in\r$ is a parameter and $s$ is the arc-length parameter \cite{eu}.  When $\alpha=0$, the energy $E_0[\gamma]$ is the  length of $\gamma$. For  $\alpha=2$, the energy $E_2[\gamma]$ is the moment of inertia of $\gamma$ with respect to the origin of $\r^2$. Assuming that $\gamma$ does not contain the origin, the critical points of $E_\alpha$ are  characterized in terms of the curvature $\kappa$ of the curve, namely,  
\begin{equation}\label{Eu2}
\kappa =\alpha\frac{\langle N ,\gamma \rangle}{|\gamma|^2},
\end{equation}
where  $N$ is the unit normal vector of $\gamma$.     Recently, the authors together A. Bueno have extended this problem to the other two Riemannian space forms, the sphere $\mathbb{S}^2$ and the hyperbolic plane $\mathbb{H}^2$ \cite{abl}.
 
In this paper, we extend Euler's variational problem to the Lorentzian two-dimensional space form. The  Lorentz-Minkowski plane $\l^2$ is defined by  $\l^2=(\r^{2},\langle,\rangle)$ where $\langle,\rangle$ is the  Lorentzian  metric   $\langle,\rangle=dx^2-dy^2$ and   $(x,y)$ are the canonical coordinates of $\r^{2}$. Given two points $P_1, P_2\in\l^2$, the problem is to determine the curve $\gamma\colon I\to\l^2$ with minimum or maximum energy $E_\alpha$ among all curves joining $P_1$ and $P_2$.  

Important differences appear in the Lorentzian context. First, in order for the arc-length $ds$ to make sense  in \eqref{Eu1}, it is necessary to restrict the study to spacelike curves. In addition, since, for $\alpha=0$, the energy $E_0[\gamma]$ reduces to the length of the curve $\gamma$, it is natural in this context to find curves which maximize the energy $E_\alpha$, in contrast to the Euclidean case, where the problem concerns minimizers of the energy.

As in the Euclidean setting, we require that $0\not\in \gamma(I)$, $s\in I\subset \r$, to assure differentiability of the functional $E_\alpha$. However, another difference arises in the Lorentzian setting concerning the term $|\gamma|^\alpha$ in \eqref{Eu1}. In $\l^2$, the quantity $\langle \gamma,\gamma\rangle$ is not necessarily positive, because for a point $p=(x,y)\in\l^2$, we have $\langle p,p\rangle=x^2-y^2$, which may be positive, zero, or negative. Thus, we will take $|\gamma|^\alpha=|\langle\gamma,\gamma\rangle|^{\alpha/2}$. In consequence,  we need to assume that the curve $\gamma$ does not intersect the lightlike cone $\mathcal{C}=\{p\in\l^2\colon \langle p,p\rangle=0\}$.  

The compliment of $\mathcal{C}$ in $\l^2$ consists of four connected components, hence $\gamma$ must be included in one of these components. We set
$$\mathcal{C}^-=\{p\in\l^2\colon \langle p,p\rangle <0\},\qquad \mathcal{C}^+=\{p\in\l^2\colon \langle p,p\rangle >0\}.$$
Each of the sets $\mathcal{C}^-$ and $\mathcal{C}^+$ is formed by two components. Depending on whether $\gamma(I)$ is contained in $\mathcal{C}^-$ or in $\mathcal{C}^+$, we have $|\langle\gamma,\gamma\rangle|=-\langle\gamma,\gamma\rangle$ or $|\langle\gamma,\gamma\rangle|=\langle\gamma,\gamma\rangle$, respectively.   This makes  to distinguish in which set the curve is contained. 

Following the standard calculus of variations (see Sect. \ref{s2}), a spacelike curve $\gamma\colon I\subset\r\to\l^2\setminus\mathcal{C}$ is a critical point of the energy $E_\alpha$ if and only if 
\begin{equation}\label{ss}
\kappa= -\alpha \frac{ \langle N,\gamma\rangle}{|\langle \gamma,\gamma\rangle|}.
\end{equation}
Eq. \eqref{ss} leads to an interesting observation. Although the problem was initially formulated only for spacelike curves, the same equation also holds for timelike curves. This allows to introduce the following definition, which applies to both types of curves.
 
\begin{definition} \label{d1}
Let $\gamma\colon I\to\l^2$ be a spacelike or timelike curve in $\l^2$ satisfying $\gamma(I) \cap \mathcal{C} = \emptyset$. We call $\gamma$ an $\alpha$-stationary surface of $E_\alpha$ if  
\begin{equation}
\kappa=-\alpha \frac{ \langle N,\gamma\rangle}{|\langle \gamma,\gamma\rangle|}. \label{eq1}
\end{equation}
\end{definition}
 
If the value of $\alpha$ is not specified, such curves will simply be called stationary curves.

A particular emphasis in this paper is to highlight the differences from the Euclidean case and the specific features that appear in the Lorentzian setting. After the derivation of Eq. \eqref{eq1} for spacelike curves in Sect. 
\ref{s2}, we will show examples of stationary curves  with constant curvature, namely straight lines and circles. 
We also give an application of the maximum principle to control the value of $\alpha$ for $\alpha$-stationary spacelike curves. 
 In Sect. \ref{s3} we establish two types of  connections between $\alpha$-stationary curves. First,  by using the symmetry $(x,y)\mapsto (y,x)$ about the lightlike cone, we carry stationary spacelike curves contained in $\mathcal{C}^+$ (resp. $\mathcal{C}^-$) to stationary timelike curves contained in $\mathcal{C}^-$ (resp. $\mathcal{C}^+)$ (Thm. \ref{t31}). On the other hand, the inversion $p\mapsto \frac{p}{\langle p,p\rangle}$ gives a connection between  $\alpha$-stationary curves with different values of $\alpha$ (Thm. \ref{ti}).  Both results will be useful in the computation and derivation of properties of  stationary curves. In   Sect. \ref{s4}, we obtain the  explicit parametrizations of all stationary curves in $\l^2$.   Finally, in Sect. \ref{s5}, we provide a first approach to the problem of maximizing the energy $E_\alpha$, obtaining the classification of the maximizers in the case the two given points which are collinear with the origin.

\section{The Euler-Lagrange equation and examples}\label{s2}

In this section, we begin with the derivation of the Euler-Lagrange equation for the energy \eqref{Eu1}, next show examples of stationary curves, and finally apply the maximum principle to estimate the value of $\alpha$ for $\alpha$-stationary spacelike curves that are far away from the lightlike cone.  

The Euler-Lagrange equation follows from standard arguments in calculus of variations.  Let $\gamma\colon I\to\l^2$ be a spacelike curve, and assume that locally  $\gamma$ is given by $y=y(x)$, $x\in [a,b]$. The condition that $\gamma$ is spacelike implies that $y'(x)^2<1$ for all $x\in [a,b]$. As a first step, we consider the case that $\gamma$ is included in $\mathcal{C}^-$, that is,  $x^2-y(x)^2 <0$ for all $x\in [a,b]$. We take the unit normal vector   $N$ of $\gamma$ to be
\begin{equation}\label{n}
N=\frac{(y',1)}{\sqrt{1-y'^2}}.
\end{equation} 

The energy functional \eqref{Eu1} can be written as
\begin{equation}\label{ee}
E_\alpha[\gamma]=\int_a^b (y^2-x^2)^{\alpha/2}\sqrt{1-y'^2}\, dx:=\int_a^b  J(x,y,y').
\end{equation}
Applying the  Euler-Lagrange equation 
$\frac{\partial J}{\partial y}=\frac{d}{dx}\left(\frac{\partial J}{\partial y'}\right)$,
we obtain
\begin{equation}\label{eq0}
\alpha \frac{xy'-y}{\sqrt{1-y'^2}}=-(y^2-x^2)\frac{d}{dx}\left(\frac{y'}{\sqrt{1-y'^2}}\right).
\end{equation}
Since
$$\langle N,\gamma\rangle=\frac{xy'-y}{\sqrt{1-y'^2}},$$
and the second parenthesis in the right hand-side is the  curvature $\kappa$ of $\gamma$, Eq. \eqref{eq0} becomes
$$\kappa=\alpha\frac{\langle N,\gamma\rangle}{\langle \gamma,\gamma\rangle}.$$
This equation coincides with \eqref{eq1}. 

In case that $\gamma(I)$ is contained in $\mathcal{C}^+$, the arguments are similar. Now the energy changes to
$$E_\alpha[\gamma]=\int_a^b (x^2-y^2)^{\alpha/2}\sqrt{1-y'^2}\, dx,$$
obtaining that critical points of the energy are characterized by the equation
$$\kappa=-\alpha\frac{\langle N,\gamma\rangle}{\langle \gamma,\gamma\rangle}.$$
Again, this equation coincides with  \eqref{eq1}. 

Although the energy \eqref{Eu1} does not make sense for timelike curves, Eq. \eqref{eq1} still holds. For timelike curves, we     apply a similar argument. Indeed,  if the curve is $y=y(x)$, the timelike condition is $y'^2>1$. For this, we consider the same expression \eqref{ee} but replace the term $\sqrt{1-y'^2}$ with $\sqrt{y'^2-1}$. 

If $\gamma$ is contained in $\mathcal{C}^+$, the computation of the Euler-Lagrange equation yields  
$$\kappa=-\alpha\frac{\langle N,\gamma\rangle}{x^2-y^2}=-\alpha\frac{\langle N,\gamma\rangle}{\langle \gamma,\gamma\rangle}.$$
This formula coincides with \eqref{eq1}. In the case that $\gamma(I)\subset \mathcal{C}^-$, the same relation is obtained, except with the opposite sign on the right-hand side. Therefore, this confirms that the extension of the Def. \ref{d1} for timelike curves is valid.

\begin{remark} 
It is clear that linear isometries of $\l^2$, as well as dilations of $\l^2$, preserve the solutions of \eqref{eq1}.
\end{remark}

We present  some examples of stationary curves among the family of curves of $\l^2$ with constant curvature. Notice that by curvature we mean the (geodesic) curvature of a curve as submanifold of $\l^2$. If $\gamma$ is a non-degenerate curve in $\l^2$ parametrized by arc-length and $\nabla$ is the Levi-Civita connection on $\l^2$, the Weingarten formula is  $Av=-\nabla_v N$ where $A$ is the shape operator, $v$ is a tangent vector  and $N$ is the unit normal vector of $\gamma$. Then  the curvature is defined by $\kappa=-\epsilon\, \mbox{trace}(A)$, where   $\epsilon=\langle \gamma',\gamma'\rangle$. Then  
\begin{equation}\label{cu}
\begin{split}
\kappa&=-\epsilon\, \mbox{trace}(A)=-\epsilon(\langle A\gamma',\epsilon\gamma'\rangle= \langle \nabla_{\gamma'}N,\gamma'\rangle=\langle N ',\gamma'\rangle\\
&=-\langle N,\gamma'' \rangle.
\end{split}
\end{equation}
We now describe the two types of circles in $\l^2$ depending on its causal character. A hyperbolic circle in $\l^2$ with radius $r>0$ and center $p_0$ is defined as  one of the two components of
$$
\h^1(p_0;r)=\{ p \in \l^2 : \langle p-p_0, p-p_0 \rangle =-r^2\}.
$$
Each components is contained in a distinct component  of $\mathcal{C}^-$. This curve is spacelike with curvature $\kappa=\frac{1}{r}$ for the orientation $N(p)=(p-p_0)/r$.

Similarly, a pseudocircle of radius $r>0$ centered at $p_0$ is defined by one of the two components of
$$
\s^1_{1}(p_0;r)=\{ p \in \l^2 : \langle p-p_0, p-p_0 \rangle =r^2\}.
$$
Each component lies in one of the components of $\mathcal{C}^+$.  A pseudocircle is a timelike curve with  curvature  $\kappa=\frac{1}{r}$  for the orientation $N(p)=-(p-p_0)/r$. For both types of circles, when the center is the origin, we denote them simply by $\h^1(r)$ and $\s^1_1(r)$, respectively.

We provide some examples of stationary curves. 
\begin{example}  
\begin{enumerate}  
\item  All non-degenerate straight lines in $\l^2$ passing through the origin $(0,0)$ are $\alpha$-stationary curves, for any  value of $\alpha$.
\item    Hyperbolic  circles centered at the origin. They are $\alpha$-stationary curves for $\alpha=1$ and for all values of $r$. 
 
\item  Pseudocircles centered at the origin. They are $\alpha$-stationary curves for $\alpha=1$  for all values of $r$.   \end{enumerate}
 \end{example}

In the following result, we find all straight lines, hyperbolic circles and pseudocircles that are $\alpha$-stationary curves:  see   Fig. \ref{fig1}. 

\begin{proposition}\label{pr1}
\begin{enumerate}
\item Non-degenerate straight lines crossing $(0,0)$ are the  only straight lines that are $\alpha$-stationary curves. This holds for any value of $\alpha$.
\item The only hyperbolic circles that are $\alpha$-stationary curves are:
\  \begin{enumerate}
\item  Hyperbolic circles $\h^1(r)$ ($\alpha=1$) and the component of  $\h^1(p_0;r)$ with $\langle p_0,p_0\rangle=-r^2$ contained in $\mathcal{C}^-$ ($\alpha=2$). 
\item The component of  $\h^1(p_0;r)$ with $\langle p_0,p_0\rangle=-r^2$ contained in $\mathcal{C}^+$ ($\alpha=-2$). This component crosses $(0,0)$.  
\end{enumerate}
\item The only pseudocircles that are $\alpha$-stationary curves are:
\begin{enumerate}
\item  Pseudocircles $\s^1_{1}(r)$ ($\alpha=1$) and  the component of    $\s^1_{1}(p_0;r)$ with $\langle p_0,p_0\rangle=r^2$  ($\alpha=2$) contained in $\mathcal{C}^+$.  
\item The component of  $\s^1_{1}(p_0;r)$   with $\langle p_0,p_0\rangle=r^2$  ($\alpha=-2$) and contained in $ \mathcal{C}^-$. This component  crosses $(0,0)$.  
\end{enumerate} 
 \end{enumerate}
\end{proposition}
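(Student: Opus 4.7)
The plan is to substitute each family of curves directly into the stationary equation \eqref{eq1} and extract, by polynomial identity in the defining function of the curve, the admissible values of $\alpha$ and the algebraic constraints on the center $p_0$ and radius $r$. Throughout, I will exploit that $\langle N,\gamma\rangle$ can be rewritten using the defining relation of the circle, which collapses the equation to a linear identity in $\langle p,p\rangle$ that must hold identically along the curve.

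For part (1), I would use that any non-degenerate straight line is a geodesic, hence $\kappa\equiv 0$, so \eqref{eq1} reduces to $\langle N,\gamma\rangle=0$ identically. Writing $\gamma(s)=p_0+sv$ with $v$ a unit tangent and $N$ a constant unit normal, we get $\langle N,\gamma(s)\rangle=\langle N,p_0\rangle$. Thus $\langle N,p_0\rangle=0$; since the Lorentzian orthogonal of $N$ is spanned by $v$, this means $p_0$ is proportional to $v$, so the line passes through $(0,0)$. Conversely, any line through the origin satisfies $\gamma(s)=sv$, which makes $\langle N,\gamma\rangle=0$ automatic, and works for every $\alpha$.

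For part (2), let $\gamma\subset\h^1(p_0;r)$ with unit normal $N=(p-p_0)/r$ and $\kappa=1/r$. Expanding $\langle p-p_0,p-p_0\rangle=-r^2$ gives $\langle p_0,p\rangle=\tfrac12(\langle p,p\rangle+\langle p_0,p_0\rangle+r^2)$, and hence
\[
\langle N,\gamma\rangle=\tfrac{1}{2r}\bigl(\langle p,p\rangle-\langle p_0,p_0\rangle-r^2\bigr).
\]
Plugging this into \eqref{eq1} and clearing signs according to whether $\gamma\subset\mathcal{C}^-$ (so $|\langle\gamma,\gamma\rangle|=-\langle p,p\rangle$) or $\gamma\subset\mathcal{C}^+$, I obtain an identity of the form $(\alpha\mp 2)\langle p,p\rangle=\alpha(\langle p_0,p_0\rangle+r^2)$ that has to hold for every $p$ on the curve. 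Along $\gamma$, $\langle p,p\rangle$ is non-constant unless $p_0=0$, so either the leading coefficient must vanish — giving $\alpha=\pm2$ together with the constraint $\langle p_0,p_0\rangle=-r^2$ (the sign in $\alpha$ distinguishing the $\mathcal{C}^-$ from the $\mathcal{C}^+$ component) — or $p_0=0$, in which case $\langle p,p\rangle=-r^2$ is constant and substitution yields $\alpha=1$. A short geometric check shows that the hypothesis $\langle p_0,p_0\rangle=-r^2$ is exactly what forces one component of $\h^1(p_0;r)$ to pass through the origin and lie in $\mathcal{C}^+$, matching case 2(b).

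Part (3) is completely analogous with $N=-(p-p_0)/r$, $\kappa=1/r$, and the defining relation $\langle p-p_0,p-p_0\rangle=r^2$; the computation yields $\langle N,\gamma\rangle=-\tfrac{1}{2r}(\langle p,p\rangle-\langle p_0,p_0\rangle+r^2)$, and the same dichotomy (either $\alpha=\pm2$ with $\langle p_0,p_0\rangle=r^2$, or $p_0=0$ with $\alpha=1$) appears after inserting into \eqref{eq1}. The only real obstacle is bookkeeping: one must be careful with the four sign combinations coming from the causal character of $\gamma$ ($\subset\mathcal{C}^\pm$) and the type of circle (hyperbolic vs.\ pseudo-), and to verify that the algebraic condition $\langle p_0,p_0\rangle=\pm r^2$ is geometrically compatible with the component of the circle lying in the claimed region $\mathcal{C}^\pm$ (in particular, which component of $\h^1(p_0;r)$ or $\s^1_1(p_0;r)$ crosses the origin). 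Once these signs are tracked, the proposition follows directly from the identity-in-$\langle p,p\rangle$ argument.
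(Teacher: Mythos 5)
Your proposal is correct and follows essentially the same route as the paper: substitute the line/circle with its known curvature and normal into \eqref{eq1} and force an identity along the curve, which pins down $\alpha$ and the constraint $\langle p_0,p_0\rangle=\mp r^2$ (the paper extracts the conditions from linear independence of $1,\sinh s,\cosh s$ in an explicit parametrization, whereas you eliminate $\langle p_0,p\rangle$ via the defining relation and use that $\langle p,p\rangle$ is non-constant unless $p_0=0$ — a cosmetic difference). The only deferred item, checking which component of a circle with $\langle p_0,p_0\rangle=\mp r^2$ lies in $\mathcal{C}^-$ versus $\mathcal{C}^+$ and crosses the origin, is exactly the short verification the paper also carries out, so nothing essential is missing.
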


\begin{proof}
It is straightforward that the only straight lines that are stationary curves are those ones crossing the origin. 

\begin{enumerate}
\item We are looking for which hyperbolic circles are $\alpha$-stationary curves. Let $\h^1(p_0;r)$ be parametrized by $\gamma(s)=(x_0,y_0)+r(\sinh(s),\pm\cosh(s))$, where $p_0=(x_0,y_0)$. Suppose that  $\h^1(p_0;r)\subset\mathcal{C}^-$. Then \eqref{eq1} is 
$$(1-\alpha)\langle\gamma,\gamma\rangle+\alpha\langle\gamma,p_0\rangle=0,$$
which can be written as
$$-(\alpha -2) r x_0 \sinh (s)  \pm (\alpha -2) r y_0 \cosh (s)+x_0^2-y_0^2+(\alpha-1)r^2=0.$$
If $p_0=(0,0)$, then $\alpha=1$. Otherwise, we have $\alpha=2$ and   $\langle p_0,p_0\rangle=-r^2$. 
In such a case,  there exists a $s_0$ such that 
$$
x_0+r\sinh s_0=0, \quad y_0 \pm r\cosh s_0=0,
$$
namely, one of the components of $\gamma$ crosses $(0,0)$. Differentiating $\langle \gamma(s),\gamma(s)\rangle$ twice, we conclude $2\langle \gamma'(s_0),\gamma'(s_0)\rangle>0$, which means that $\langle \gamma(s),\gamma(s)\rangle >0$ around $s_0$, or equivalently $\gamma(s)\in \mathcal{C}^+$. This is not possible. The  other component is  included in $\mathcal{C}^-$, proving the result. 

Suppose now $\h^1(p_0;r)\subset\mathcal{C}^+$. Now Eq.  \eqref{eq1} is 
$$(1+\alpha)\langle\gamma,\gamma\rangle-\alpha\langle\gamma,p_0\rangle=0,$$
or equivalently, 
$$(\alpha +2) r x_0 \sinh (s)  \mp  (\alpha +2) r y_0 \cosh (s)+x_0^2-y_0^2-(\alpha+1)  r^2=0.$$
Notice that if $p_0=(0,0)$, then $\h^1(r)$ is included in $\mathcal{C}^-$, which it is not possible. Thus $p_0\not=(0,0)$ and consequently,  $\alpha=-2$ and   $\langle p_0,p_0\rangle=-r^2$. The component of $\h^1(p_0;r)$ that crosses $(0,0)$ is contained in $\mathcal{C}^+$ but the other is contained in $\mathcal{C}^-$.
 
 \item Let $\s^1_{1}(p_0;r)$ be a pseudocircle, which it is parametrized by $\gamma(s)=(x_0,y_0)+r(\pm \cosh (s),\sinh(s))$. Suppose that $\s^1_{1}(p_0;r)$ is contained in $\mathcal{C}^-$. Then \eqref{eq1} gives
 $$(1+\alpha)\langle\gamma,\gamma\rangle-\alpha\langle\gamma,p_0\rangle=0.$$
This equation can be written as
$$  \pm  (\alpha +2) r x_0 \cosh (s)-(\alpha +2) r y_0 \sinh (s)+x_0^2-y_0^2+(\alpha+1)r^2=0.$$
 The case $p_0=(0,0)$ is not possible because this pseudocircle  is included in $\mathcal{C}^-$. Thus $p_0\not=(0,0)$, hence $\alpha=-2$ and $\langle p_0,p_0\rangle=r^2$. Only one component is included in $\mathcal{C}^-$: the other one is included in $\mathcal{C}^+$, which must be discarded.
 
 Suppose   that  $\s^1_{1}(p_0;r)$ is contained in $\mathcal{C}^+$. Now \eqref{eq1} gives
 $$(1-\alpha)\langle\gamma,\gamma\rangle+\alpha\langle\gamma,p_0\rangle=0,$$
or equivalently, 
$$ \pm (2-\alpha) r x_0 \cosh (s)+(\alpha -2) r y_0 \sinh (s)+x_0^2-y_0^2-(\alpha-1)r^2=0.$$
If $p_0=(0,0)$, then we have $\alpha=1$. Otherwise, $p_0\not=(0,0)$, it follows that $\alpha=2$ and $\langle p_0,p_0\rangle=r^2$. Only one component is included in $\mathcal{C}^+$ which, in addition, crosses $(0,0)$.     \end{enumerate}
\end{proof}
 
 \begin{figure}[h]
\begin{center}
\includegraphics[width=.4\textwidth]{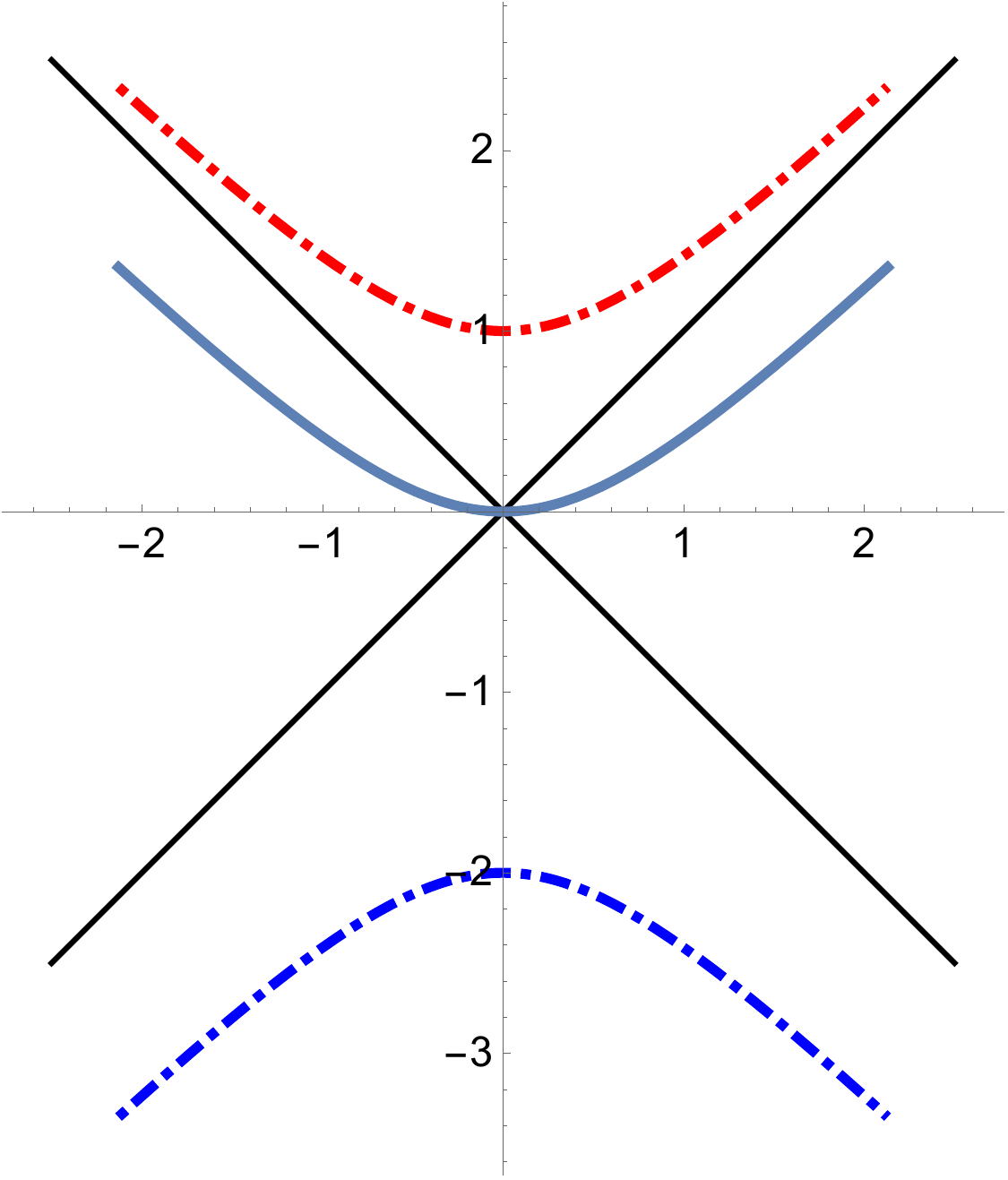}\qquad,\includegraphics[width=.5\textwidth]{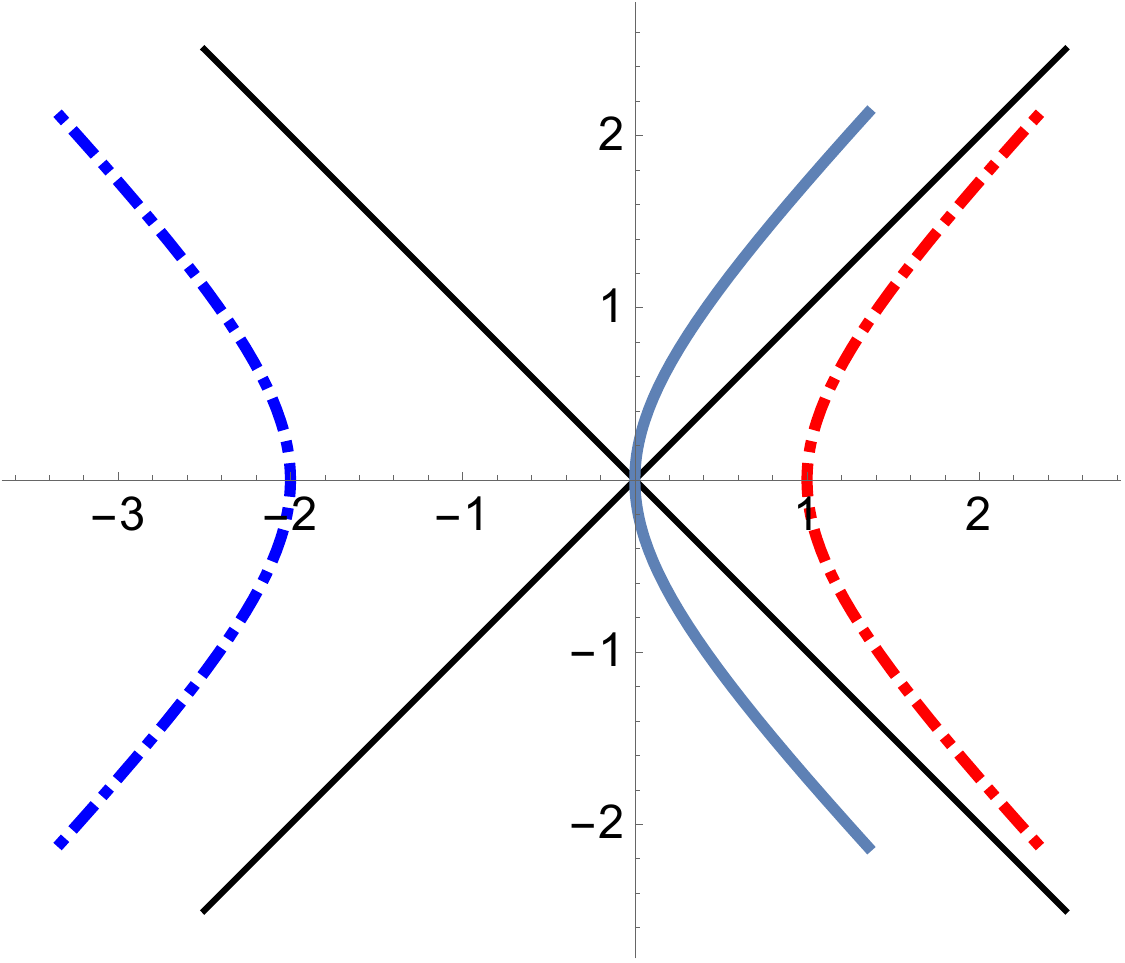}
\end{center}
\caption{Hyperbolic circles and pseudocircles that are stationary curves. Left: hyperbolic circles contained in $\mathcal{C}^{-}$ (dotted) and contained in $\mathcal{C}^+$ (solid). Right:  pseudocircles contained in $\mathcal{C}^{-}$ (solid) and contained in $\mathcal{C}^+$ (dotted).   }
\label{fig1}
\end{figure}

It deserves to point out a difference with the Euclidean case. In   $\r^2$, the only circles that are $\alpha$-stationary curves are those centered at $(0,0)$ ($\alpha=-1$)  and those crossing $(0,0)$ ($\alpha=-2$). In the Lorentzian plane, there are the analog circles (hyperbolic circles and pseudocircles) with suitable signs on $\alpha$. However, a new case appears in $\l^2$ which has not a counterpart in $\r^2$: there are hyperbolic circles and pseudocircles neither cross the origin and nor  centered at the origin: cases (2a) and (3a) in Prop. \ref{pr1}. The sign of $\alpha$ is that of the circles centered at the origin but its value differs.
 
We finish this section with an application of the maximum principle for elliptic equations. Eq. \eqref{eq1} is elliptic when the curve is spacelike  and the maximum principle can be applied: the argument is the same that in the Euclidean context (\cite[Prop. 2.5]{dl2}). First, for a curve $\gamma$, we define
$$\overline{\kappa}=\kappa+\alpha\frac{\langle N,\gamma\rangle}{|\langle \gamma,\gamma\rangle|}.$$
Given $\alpha\in\r$, the maximum principle asserts that if  two  spacelike curves $\gamma_i$, $i=1,2$, which are tangent at a common point $s=s_0$, and if $\gamma_1$ lies above $\gamma_2$ around $p$ according to the orientation by the (coincident) normal vectors at $s_0$, then $\overline{\kappa}_1(s_0)\geq \overline{\kappa}_2(s_0)$.  If, in addition,  $\overline{\kappa}_i$ are constant and $\overline{\kappa}_1=\overline{\kappa}_2$, then $\gamma_1=\gamma_2$ in a neighborhood of $s=s_0$. 
  
 Thanks to the maximum principle, it can be proved that in Euclidean plane, the only closed stationary curves are  circles centered at the origin ($\alpha=-1$). This result cannot have a counterpart in Lorentzian setting because   there are not closed spacelike curves. However, we will use hyperbolic circles $\h^1(r)$ to compare with $\alpha$-stationary spacelike curves obtaining information on the value of $\alpha$. For this, we give the following definition. A set $S\subset\l^2$ is said to be far away from the lightlike cone if there is $\delta>0$ such that 
  $$S\subset \mathcal{C}_\delta:=\{(x,y)\in\l^2\colon x^2-(y-\delta)^2<0\}.$$
  The notion of far away from the lightlike cone can be expressed in terms of causality of $\l^2$. It is equivalent to say that $S$ is contained in the future (resp. past) causal of the point $p_\delta:=(0,\delta)$, which is denoted in the literature by  $J^+(p_{  \delta})$ (resp. $J^{-}(p_\delta)$).   
    
  \begin{theorem}\label{t24}
   Let $\gamma$ be a properly immersed $\alpha$-stationary spacelike curve and  contained in $\mathcal{C}^-$. If $\gamma$ is far away   from the lightlike cone, then $\alpha>1$.
  \end{theorem}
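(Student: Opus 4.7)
The strategy is to use the hyperbolic circles $\h^1(r)$ centered at the origin as barriers and invoke the maximum principle recalled above. On the upper branch of $\h^1(r)$, parametrized by $\gamma_r(s)=r(\sinh(s/r),\cosh(s/r))$ and oriented by the future-pointing timelike normal $N_r=\gamma_r/r$, a direct computation yields $\kappa_r=1/r$ and $\langle N_r,\gamma_r\rangle/|\langle\gamma_r,\gamma_r\rangle|=-1/r$, so that along $\h^1(r)$,
\begin{equation*}
\overline{\kappa}=\frac{1-\alpha}{r}.
\end{equation*}
Its sign separates the cases $\alpha<1$, $\alpha=1$, and $\alpha>1$, so a single tangential contact with $\gamma$ on the correct side of some $\h^1(r_*)$ already forces $\alpha\geq 1$.

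To produce such a contact point, after applying the isometry $(x,y)\mapsto(-x,-y)$ if necessary, I assume $\gamma\subset\{y>|x|+\delta\}$. I restrict the function $h(p)=-\langle p,p\rangle=y^2-x^2$ to $\gamma$. From $y-|x|>\delta$ and $y+|x|>2|x|$ one gets $h(p)>2\delta|x|$, so any sublevel set $\{h\leq K\}\cap\gamma$ has $|x|\leq K/(2\delta)$ and hence $|y|$ also bounded. Together with the properness of the immersion, this shows that $h|_\gamma$ attains a minimum at some point $p_*\in\gamma$ with $h(p_*)=r_*^2\geq\delta^2>0$. At $p_*$ the level set $\{h=r_*^2\}$ is the upper branch of $\h^1(r_*)$; since $p_*$ is an interior minimum of $h|_\gamma$, the two curves are tangent at $p_*$, and $\gamma$ lies on the side $\{h\geq r_*^2\}$ toward which the normal $N_*=p_*/r_*$ of $\h^1(r_*)$ points.

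I then orient $\gamma$ so that its unit normal at $p_*$ agrees with $N_*$ and apply the maximum principle. Since $\gamma$ lies above $\h^1(r_*)$ around $p_*$,
\begin{equation*}
0=\overline{\kappa}_\gamma(p_*)\;\geq\;\overline{\kappa}_{\h^1(r_*)}(p_*)=\frac{1-\alpha}{r_*},
\end{equation*}
giving $\alpha\geq 1$. The equality $\alpha=1$ is ruled out by the uniqueness clause of the maximum principle: it would force $\gamma$ and $\h^1(r_*)$ to coincide in a neighborhood of $p_*$; since both are integral curves of the same second-order ODE \eqref{eq1} with matching initial data at $p_*$, and $\gamma$ is properly immersed, this local coincidence propagates to $\gamma=\h^1(r_*)$ globally. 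But $\h^1(r_*)$ has the null lines $y=\pm x$ as asymptotes and so is not contained in any $\mathcal{C}_\delta$, contradicting the hypothesis. Hence $\alpha>1$.

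The main difficulty is the existence of the interior minimum of $h|_\gamma$, which is exactly where both hypotheses intervene: the far-from-the-lightcone condition provides the a priori estimate $|x|\leq h/(2\delta)$ that prevents a minimizing sequence from escaping along the null asymptotes of $\mathcal{C}^-$, while properness is what promotes a bounded minimizing sequence in $\gamma(I)$ into an actual minimizer lying on $\gamma$. The borderline case $\alpha=1$ is the secondary delicate point, where properness is invoked again to upgrade the local identification with $\h^1(r_*)$ to a global one.
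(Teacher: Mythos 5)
Your proof is correct and follows essentially the same route as the paper: comparison with the hyperbolic circles $\h^1(r)$ via the maximum principle, with the identity $\overline{\kappa}=(1-\alpha)/r$ on the barrier and the same exclusion of $\alpha=1$ by the touching/uniqueness clause. The only difference is presentational — you produce the contact point by minimizing $h=-\langle p,p\rangle$ along $\gamma$ (making explicit, via the bound $h>2\delta|x|$ and properness, why the infimum is attained), whereas the paper sweeps the family $\h^1_+(r)$ upward to a first touching radius $r_0$; these are the same argument, and your version actually justifies a step the paper leaves implicit.
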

  \begin{proof}
  Since $\gamma$ is connected, and after the linear isometry $(x,y)\mapsto (x,-y)$, we can assume that $\gamma$ is contained in the component of  $\mathcal{C}_\delta$ given by $\mathcal{C}_\delta^+=\{(x,y)\in \mathcal{C}_\delta\colon y\geq \delta\}$. The argument is similar if $\gamma$ is included in the other component. 
 Denote by $\h^1_+(r)$ the component of the hyperbolic circle $\h^1(r)$   contained in $\mathcal{C}^-\cap\{y\geq 0\}$. Notice that  $\h^1_+(r)$ is asymptotic to the two branches of $\mathcal{C}\cap\{y\geq 0\}$ and its vertex is $(0,r)$. Let $r$ be sufficiently small so that $\gamma(I)\cap\h^1_+(r)=\emptyset$. This is possible because $(0,r)\to (0,0)$ as $r\to 0$ and the boundary of   $ \mathcal{C}_\delta^+$ and the half-cone $\mathcal{C}\cap\{y\geq 0\}$ are at finite positive distance. In fact, the distance between $\gamma(I)$ and $\h^1_+(r)$ is positive and finite.

Let us increase the value of $r$,  $r\nearrow\infty$. Then there is a first number $r_0$ such that  $\h^1_+(r_0)$ and $\gamma(I)$ touch. Let  $s=s_0$ be a common point of $\gamma(I)\cap \h^1_+(r_0)$. By considering the unit normal vector $N(p)=p/r_0$ at $\h^1_+(r_0)$,   the set $\gamma(I)$ lies above $\h^1_+(r_0)$. For the parameter $\alpha$,  the maximum principle at the common contact point $s_0$ gives
  $$0=\overline{\kappa}_\gamma(s_0)\geq \overline{\kappa}_{\h^1_+(r_0)}(s_0)=\frac{1}{r}-\alpha\frac{1/r_0}{r_0^2}=\frac{1-\alpha}{r_0},$$
which implies $\alpha\geq 1$. However, the case $\alpha=1$ cannot occur. In such a situation, both curves $\gamma$ and  $\h^1_+(r_0)$ have the same constant value for $\overline{\kappa}$ and $\overline{\kappa}=0$. The maximum principle would imply that $\gamma$ coincides with the hyperbolic circle $\h^1_+(r_0)$. This is not possible because $\h^1_+(r_0)$ is not far away from the lightlike cone. 
  \end{proof}

\begin{remark} Notice that in Prop. \ref{pr1}, (2a), the component of $\h^1(p_0;r)$, with $\langle p_0,p_0\rangle=-r^2$ and contained in $\mathcal{C}^{-}$, is far away from the lightlike cone.  The value of $\alpha$ for this curve is $\alpha=2$, which is consistent with Thm. \ref{t24}. To check that this component is far away from the lightlike cone, we can assume without loss of generality that the curve is contained in $\mathcal{C}^{-}\cap \{y>0\}$ and $p_0=r(\sinh(t),\cosh(t))$ with $t>0$. Then the curve is contained in $C_\delta$ with $\delta=re^{-t}$.
\end{remark}
\section{Two relationships between stationary    curves}\label{s3}

Prop. \ref{pr1} provides similar statements between stationary (spacelike and timelike) circles: compare (2a) with (3a) and (2b) with (3b) respectively. This can be obtained from a more general viewpoint as we now show. 

Define the map 
$$\Phi\colon\l^2\to\l^2,\quad \Phi(x,y)=(y,x).$$
This map is an involution, that is,   $\Phi^2=1_{\l^2}$.   It is clear that $\Phi$ maps spacelike (timelike) vectors to timelike (spacelike) vectors. Moreover,  $\langle\Phi(p),\Phi(p)\rangle=-\langle p,p\rangle$. Consequently, $\Phi$ carries   spacelike curves  contained in $\mathcal{C}^+$ (resp. $\mathcal{C}^-$) to timelike curves contained in $\mathcal{C}^-$ (resp. $\mathcal{C}^+$). The reverse process also holds: timelike curves are carried to spacelike curves. Thus, the map $\Phi$ has a well-defined behaviour for stationary curves.

\begin{theorem}\label{t31}
 Let $\gamma$ be an $\alpha$-stationary spacelike curve contained in $\mathcal{C}^-$ (resp. $\mathcal{C}^+$). Then   $\widetilde{\gamma}=\Phi\circ\gamma$ is an $\alpha$-stationary timelike curve contained in $\mathcal{C}^+$ (resp. $\mathcal{C}^-$).   
\end{theorem}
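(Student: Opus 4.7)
The plan is to exploit the fact that $\Phi$ is a linear \emph{anti-isometry} of $\l^2$, that is, $\langle \Phi(p),\Phi(q)\rangle=-\langle p,q\rangle$ for all $p,q\in\l^2$, which follows from a one-line direct computation with $\Phi(x,y)=(y,x)$. From this identity it follows at once that $\Phi$ swaps the two causal characters (spacelike $\leftrightarrow$ timelike) and that $\Phi(\mathcal{C}^-)=\mathcal{C}^+$, so $\widetilde{\gamma}=\Phi\circ\gamma$ is a timelike curve contained in the opposite region to the one containing $\gamma$.

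Next I would set $\widetilde{N}:=\Phi\circ N$ and check that this is the relevant unit normal of $\widetilde{\gamma}$. Since $\Phi$ is linear, $\widetilde{\gamma}'=\Phi(\gamma')$ and $\widetilde{\gamma}''=\Phi(\gamma'')$. The anti-isometry identity gives $\langle\widetilde{\gamma}',\widetilde{\gamma}'\rangle=-\langle\gamma',\gamma'\rangle$, so if $s$ is spacelike arc-length for $\gamma$ then it is timelike arc-length for $\widetilde{\gamma}$; similarly $\langle\widetilde{N},\widetilde{\gamma}'\rangle=-\langle N,\gamma'\rangle=0$ and $\langle\widetilde{N},\widetilde{N}\rangle=-\langle N,N\rangle=+1$, the correct normalization for the spacelike normal of a timelike curve.

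Using the intrinsic curvature formula \eqref{cu}, which reads $\kappa=-\langle N,\gamma''\rangle$ irrespective of causal type, the anti-isometry identity yields
\begin{equation*}
\widetilde{\kappa}=-\langle\widetilde{N},\widetilde{\gamma}''\rangle=-\langle\Phi(N),\Phi(\gamma'')\rangle=\langle N,\gamma''\rangle=-\kappa.
\end{equation*}
Combining this with $\langle\widetilde{N},\widetilde{\gamma}\rangle=-\langle N,\gamma\rangle$ and $|\langle\widetilde{\gamma},\widetilde{\gamma}\rangle|=|\langle\gamma,\gamma\rangle|$, the $\alpha$-stationary equation \eqref{eq1} for $\widetilde{\gamma}$ becomes
\begin{equation*}
-\kappa=\widetilde{\kappa}=-\alpha\frac{\langle\widetilde{N},\widetilde{\gamma}\rangle}{|\langle\widetilde{\gamma},\widetilde{\gamma}\rangle|}=\alpha\frac{\langle N,\gamma\rangle}{|\langle\gamma,\gamma\rangle|},
\end{equation*}
which is exactly equivalent to $\gamma$ being $\alpha$-stationary. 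Hence $\widetilde{\gamma}$ satisfies \eqref{eq1} if and only if $\gamma$ does.

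The only real obstacle is bookkeeping: one must check that the pair $(\widetilde{\gamma}',\widetilde{N})$ obtained from $\Phi$ is in fact an admissible Frenet-type frame for a timelike curve (correct sign of $\langle\widetilde{N},\widetilde{N}\rangle$, compatibility of $s$ as arc-length, and sign conventions for $\kappa$). Once these are verified, everything reduces to tracking the two minus signs produced by the anti-isometry, which cancel neatly on the right-hand side of \eqref{eq1}. Note that the conclusion is independent of the orientation of $N$, since reversing $N$ flips the sign of both $\kappa$ and $\langle N,\gamma\rangle$.
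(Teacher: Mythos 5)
Your proposal is correct and follows essentially the same route as the paper: take $\widetilde{N}=\Phi\circ N$, use that $\Phi$ reverses the metric to get $\widetilde{\kappa}=-\kappa$, $\langle\widetilde{N},\widetilde{\gamma}\rangle=-\langle N,\gamma\rangle$ and $|\langle\widetilde{\gamma},\widetilde{\gamma}\rangle|=|\langle\gamma,\gamma\rangle|$, and observe that the two sign changes cancel in Eq.~\eqref{eq1}. Your explicit verification that $\widetilde{N}$ is an admissible unit normal and that $s$ remains an arc-length parameter is a detail the paper leaves implicit, but the argument is the same.
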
 
 
\begin{proof}
Let $\gamma$ be an $\alpha$-stationary spacelike curve contained in $\mathcal{C}^-$, parametrized by arc-length. Then we have the relations
$$\widetilde{\gamma}=\Phi\circ\gamma,\quad \widetilde{N}=\Phi\circ N.$$
It follows $|\langle\gamma,\gamma\rangle|=|\langle\widetilde{\gamma},\widetilde{\gamma}\rangle|$. Due to $\Phi^2=1_{\l^2}$, we derive
$$\langle N,\gamma\rangle=\langle\Phi\widetilde{N},\Phi\widetilde{\gamma}\rangle=-\langle\widetilde{N},\widetilde{\gamma}\rangle.$$
By using \eqref{cu}, we compute
$$\widetilde{\kappa}=-\langle\widetilde{\gamma''},\widetilde{N}\rangle=-\langle\Phi\circ\gamma'',\Phi \circ N\rangle=\langle\gamma'',N\rangle=-\kappa.$$
Finally, by \eqref{eq1}, we have
$$\widetilde{\kappa}=- \kappa=\alpha\frac{\langle N,\gamma\rangle}{|\langle\gamma,\gamma\rangle|}=-\alpha\frac{\langle \widetilde{N},\widetilde{\gamma}\rangle}{|\langle\widetilde{\gamma},\widetilde{\gamma}\rangle|}.$$
This proves that $\widetilde{\gamma}$ is an $\alpha$-stationary timelike curve contained in $\mathcal{C}^+$. The argument is similar when $\gamma$ is an $\alpha$-stationary spacelike curve contained in   $\mathcal{C}^+$.
\end{proof}
 
From this result, the statements of Proposition \ref{pr1} follow as a consequence, see also Fig. \ref{fig1}.
 
Another interesting map in $\l^2$ for $\alpha$-stationary curves is the inversion with respect to the lightlike cone, defined by
$$\Psi\colon\l^2\setminus\mathcal{C}\to\l^2,\quad \Psi(p)=\frac{p}{\langle p,p\rangle}.$$
Notice that $\Psi$ preserves the components of both $\mathcal{C}^+$ and $\mathcal{C}^-$. 
We prove that $\Psi$ maps $\alpha$-stationary curves to stationary curves, but with different value of $\alpha$. 

First, we introduce hyperbolic polar coordinates, which are the Lorentzian analogue of the polar coordinates in the Euclidean plane. Given a point $p=(x,y)\in\l^2$ with $\langle p,p\rangle\not=0$,   there exist $\rho>0$ and $\varphi\in\r$ such that 
\begin{equation*}
\begin{split}
p=\rho(\sinh\varphi,\pm\cosh\varphi) &\quad \mbox{if $p\in\mathcal{C}^-$}\\
p=\rho(\pm\cosh\varphi,\sinh\varphi) &\quad \mbox{if $p\in\mathcal{C}^+$.}\\
\end{split}
\end{equation*}
The sign $\pm$ appears because each of the sets $\mathcal{C}^-$ and $\mathcal{C}^+$ is formed by two components. Applying the linear symmetries of $\l^2$ given by  $(x,y)\mapsto (x,-y)$ and $(x,y)\mapsto (-x,y)$, we can restrict to the choice of the positive sign in the hyperbolic polar coordinates. 

Let $\gamma=\gamma(s)=(x(s),y(s))$ be a  spacelike curve   included in $\mathcal{C}^-$. Let $\rho=\rho(s)$ be a smooth function such that  
$$\gamma(s)=\rho(s)(\sinh(s), \cosh(s)).$$
The curvature $\kappa$ is given in terms of $\rho$ by 
$$\kappa=\frac{\rho  \left(\rho ''+\rho \right)-2 \rho '^2}{\left(\rho ^2-\rho '^2\right)^{3/2}},$$
where the unit normal vector is 
$$N=\frac{1}{\sqrt{\rho^2-\rho'^2}}(\sinh(s)\rho+\cosh(s)\rho',\cosh(s)\rho+\sinh(s)\rho').$$
The spacelike condition is $\rho^2-\rho'^2>0$. Then Eq. \eqref{eq1} is
\begin{equation}\label{k1}
\rho  \rho ''+(\alpha -2) \rho '^2+(1-\alpha ) \rho ^2=0.
\end{equation}
If $\gamma$ is contained in $\mathcal{C}^+$, the spacelike condition is $\rho'^2-\rho^2>0$, and Eq. \eqref{eq1} is
\begin{equation}\label{k2}
\rho\rho''-(\alpha+2)\rho'^2+(1+\alpha)\rho^2=0.
\end{equation}
If $\gamma$ is a timelike curve, then Eq. \eqref{eq1} coincides with \eqref{k1} when $\gamma$ is contained in $\mathcal{C}^+$ and with \eqref{k2} when $\gamma$ is contained in $\mathcal{C}^-$.

Once we have the equations for the $\alpha$-stationary curves, we now state the following result about the inversion with respect to the lightlike cone (see \cite{lo1} for the Euclidean counterpart).

\begin{theorem}\label{ti}
Let $\gamma$ be a non-degenerate curve contained in $\l^2 \setminus \mathcal{C}$, and let $\widetilde{\gamma}$ denote its inverse, $\widetilde{\gamma}=\Psi\circ\gamma$. Then $\widetilde{\gamma}$ and $\gamma$ have the same causal character. Moreover:
\begin{enumerate}
\item $\gamma$ is an $\alpha$-stationary curve contained in $\mathcal{C}^-$ if and only if $\widetilde{\gamma}$ is an $(2-\alpha)$-stationary curve contained in $\mathcal{C}^-$.
\item $\gamma$ is an $\alpha$-stationary curve contained in $\mathcal{C}^+$ if and only if $\widetilde{\gamma}$ is an $(-2-\alpha)$-stationary curve contained in $\mathcal{C}^+$.
\end{enumerate}
\end{theorem}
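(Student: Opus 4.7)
The approach is a direct substitution in hyperbolic polar coordinates, combined with the observation that $\Psi$ is an involution.

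First I would verify the preliminary facts. Writing $f=\langle\gamma,\gamma\rangle$, a short computation using $\widetilde{\gamma}'=\gamma'/f-(f'/f^{2})\gamma$ and $f'=2\langle\gamma,\gamma'\rangle$ gives
$$
\langle\widetilde{\gamma},\widetilde{\gamma}\rangle=\frac{1}{f}, \qquad \langle\widetilde{\gamma}',\widetilde{\gamma}'\rangle=\frac{\langle\gamma',\gamma'\rangle}{f^{2}}.
$$
Both signs thus match those of $\gamma$, so $\widetilde{\gamma}$ has the same causal character as $\gamma$ and belongs to the same set $\mathcal{C}^{-}$ or $\mathcal{C}^{+}$ (modulo the linear isometry $p\mapsto -p$, which preserves the class of $\alpha$-stationary curves). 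Moreover $\Psi\circ\Psi=\mathrm{id}_{\l^{2}\setminus\mathcal{C}}$, so each equivalence in (1)--(2) reduces to a single implication.

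For part (1), parametrize $\gamma\subset\mathcal{C}^{-}$ as $\gamma(s)=\rho(s)(\sinh s,\cosh s)$ with $\rho>0$, so $f=-\rho^{2}$ and
$$
\widetilde{\gamma}(s)=-\frac{1}{\rho(s)}(\sinh s,\cosh s).
$$
Applying $p\mapsto -p$, the inverse admits the standard polar form with radial function $\widetilde{\rho}=1/\rho$. From $\widetilde{\rho}'=-\rho'/\rho^{2}$ and $\widetilde{\rho}''=-\rho''/\rho^{2}+2\rho'^{2}/\rho^{3}$, substituting into the left-hand side of \eqref{k1} with $\alpha$ replaced by $2-\alpha$ and clearing $\rho^{4}$ yields
$$
\widetilde{\rho}\,\widetilde{\rho}''+\bigl((2-\alpha)-2\bigr)\widetilde{\rho}'^{2}+\bigl(1-(2-\alpha)\bigr)\widetilde{\rho}^{2} = -\frac{1}{\rho^{4}}\Bigl(\rho\rho''+(\alpha-2)\rho'^{2}+(1-\alpha)\rho^{2}\Bigr),
$$
which vanishes precisely when $\rho$ satisfies \eqref{k1} with parameter $\alpha$. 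Hence $\widetilde{\gamma}$ is $(2-\alpha)$-stationary iff $\gamma$ is $\alpha$-stationary.

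Part (2) is entirely parallel: for $\gamma\subset\mathcal{C}^{+}$ parametrized as $\gamma(s)=\rho(s)(\cosh s,\sinh s)$, one has $f=\rho^{2}$ (no sign flip) and $\widetilde{\rho}=1/\rho$; substituting into the left-hand side of \eqref{k2} with $\alpha$ replaced by $-2-\alpha$ yields $-\rho^{-4}$ times the left-hand side of \eqref{k2} with parameter $\alpha$. The main obstacle is purely clerical: keeping careful track of the sign produced by the inversion on $\mathcal{C}^{-}$ (absorbed by $p\mapsto -p$) and correctly matching the three coefficients of $\rho\rho''$, $\rho'^{2}$, and $\rho^{2}$ to identify the new value of $\alpha$ in each case. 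There is no analytic difficulty; once the ansatz $\widetilde{\rho}=1/\rho$ is in hand, the rest is a one-line algebraic identity.
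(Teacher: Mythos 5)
Your computations for the spacelike case are correct and follow essentially the same route as the paper: pass to hyperbolic polar coordinates, observe $\widetilde{\rho}=1/\rho$, and substitute into \eqref{k1} (resp. \eqref{k2}) to read off the new exponent $2-\alpha$ (resp. $-2-\alpha$). Your preliminary step is actually more careful than the paper's: the identities $\langle\widetilde{\gamma},\widetilde{\gamma}\rangle=1/f$ and $\langle\widetilde{\gamma}',\widetilde{\gamma}'\rangle=\langle\gamma',\gamma'\rangle/f^{2}$ give a clean proof that $\Psi$ preserves causal character and the sets $\mathcal{C}^{\pm}$, which the paper only asserts, and the explicit handling of the sign $f=-\rho^{2}$ on $\mathcal{C}^{-}$ via $p\mapsto -p$ fixes a point the paper glosses over.

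There is, however, a gap in the timelike case. Equation \eqref{k1} characterizes $\alpha$-stationarity for curves in $\mathcal{C}^{-}$ only when they are \emph{spacelike}; as recorded just before the theorem, a timelike curve in $\mathcal{C}^{-}$ is $\alpha$-stationary precisely when $\rho$ satisfies \eqref{k2}, and a timelike curve in $\mathcal{C}^{+}$ when $\rho$ satisfies \eqref{k1}. Your part (1) therefore proves nothing about timelike curves in $\mathcal{C}^{-}$, and likewise for part (2); you have established only the spacelike half of each item. The paper closes this by invoking Theorem \ref{t31}: the reflection $\Phi(x,y)=(y,x)$ carries timelike stationary curves to spacelike ones with the same $\alpha$, and (up to the isometry $p\mapsto-p$) commutes with $\Psi$, so the timelike case reduces to the spacelike one. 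You need either to run that reduction explicitly or to repeat your substitution with the correct equation (\eqref{k2} on $\mathcal{C}^{-}$, \eqref{k1} on $\mathcal{C}^{+}$) for timelike curves. Be warned that doing so shows the exponent rule is governed by \emph{which of \eqref{k1} or \eqref{k2} the curve satisfies} rather than by the component $\mathcal{C}^{\pm}$ alone: for instance, the pseudocircles $\s^1_1(r)\subset\mathcal{C}^{+}$ are timelike, $1$-stationary, and preserved by $\Psi$ (consistent with $\alpha\mapsto 2-\alpha$, not $\alpha\mapsto-2-\alpha$), as the paper's own examples after the theorem confirm. So the timelike case is not a formality that your argument silently covers; it genuinely behaves differently from what a literal reading of items (1)--(2) suggests, and any complete proof must confront it.
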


\begin{proof} 
By Theorem \ref{t31}, it suffices to consider spacelike curves. Suppose that $\gamma$ is contained in $\mathcal{C}^-$. In hyperbolic polar coordinates, the inverse curve $\widetilde{\gamma}$ can be written as $\widetilde{\gamma}(s)=\frac{1}{\rho(s)}(\sinh(s),\cosh(s))$. Thus $\widetilde{\rho}=1/\rho$. In terms of $\widetilde{\rho}$, Eq. \eqref{k1} is equivalent to
$$ \widetilde{\rho}\widetilde{\rho}''-\alpha \widetilde{\rho}'^2+(\alpha-1)\widetilde{\rho}^2=0.$$
Since $\widetilde{\gamma}$ is included in $\mathcal{C}^-$, a comparison with Eq. \eqref{k1} yields that $\widetilde{\gamma}$ is an $(2-\alpha)$-stationary spacelike curve. The argument is similar when $\gamma$ is contained in $\mathcal{C}^+$.
\end{proof}

The inversion $\Psi$ keeps invariant, as set,  the family of straight lines, hyperbolic circles and pseudocircles. We show some interesting examples of  applications of Thm. \ref{ti}. First, notice that any non-degenerate straight line of $\l^2$ is a curve of $\kappa=0$. Thus this curve satisfies Eq. \eqref{eq1} for $\alpha=0$ and we can apply Thm. \ref{ti}.

\begin{enumerate}
\item The components of  $\h^1(r)$ are contained in $\mathcal{C}^-$ and they  are interchanged by $\Psi$. The value of $\alpha $ ($\alpha=1$) is also preserved by Thm. \ref{ti}. 
\item Similarly, the pseudocircles $\s^1_1(r)$ are preserved via $\Psi$. They are $1$-stationary curves.
 \item Consider the straight line $\gamma$ of equation $y=1$, where $\gamma \cap \mathcal{C}=\emptyset$. Then $\gamma$ is spacelike and it has   components in $\mathcal{C}^+$ and in $\mathcal{C}^-$. Its image by $\Psi$ has three components. The component contained in  $\mathcal{C}^-$ goes to the hyperbolic circle $\h^1(1)$ contained in $\mathcal{C}^-\cap\{y<0\}$ and it is a $2$-stationary curve: see (2a) of Prop. \ref{pr1}.  The components of $\gamma$ contained in $\mathcal{C}^+$ go, via $\Psi$, to $(-2)$-stationary  curves in $\mathcal{C}^+$. They are the hyperbolic circles of (2b) of Prop. \ref{pr1}.
\item Let $\gamma$ be the vertical line of equation $x=1$, where $\gamma \cap \mathcal{C}=\emptyset$. This curve is timelike and stationary for $\alpha=0$. Also, $\gamma$ has components in $\mathcal{C}^-$ and in $\mathcal{C}^+$.   The  image of the component of  $\gamma$ in $\mathcal{C}^+$ via $\Psi$ is the pseudocircle of (3a) in Prop. \ref{pr1}, which is a  $(-2)$-stationary curve. The image of the components of $\gamma$ contained in $\mathcal{C}^-$ via $\Psi$ are  pieces of pseudocircles which are $(-2)$-stationary curves: see (3a) in Prop. \ref{pr1}.
\end{enumerate}

\section{Parametrizations of stationary   curves}\label{s4}

In this section, we obtain explicit parametrizations of all stationary (spacelike and timelike) curves of $\l^2$.
 We begin with obtaining all stationary spacelike curves contained in $\mathcal{C}^-$: see Fig. \ref{fig2}.

\begin{theorem} \label{pr41}
Up to a linear isometry and a dilation of $\l^2$, the   $\alpha$-stationary spacelike curves contained in $\mathcal{C}^-$ are parametrized by $\gamma(s)=\rho(s)(\sinh(s),\cosh(s))$, where 
\begin{equation}\label{so}
\rho(s)=\left\{\begin{array}{ll}
 \cosh((\alpha-1) s)^\frac{1}{\alpha-1},&\alpha\not=1\\
 e^{c s},&\alpha=1, c^2<1.
 \end{array}
 \right.
\end{equation}
 Moreover, 
\begin{enumerate}
\item If $\alpha> 1$, then  the two branches of  $\gamma$ are asymptotic to a vertical translation of the lightlike cone $\mathcal{C}$.  
\item If $\alpha=1$, then $\gamma$ is a hyperbolic circle $\h^1(r)$ centered at the origin or the two branches of $\gamma$ are asymptotic to $\mathcal{C}$. 
\item If $0<\alpha<1$, then $\gamma$ meets tangentially $\mathcal{C}$ at two points.  
\item If $\alpha<0$, then $\gamma$ intersects orthogonally $\mathcal{C}$ at two points.
\end{enumerate}
\end{theorem}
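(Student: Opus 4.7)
The strategy is to solve the ODE \eqref{k1} explicitly by quadrature, normalize via dilations, and then read the qualitative behavior (1)--(4) off the explicit solution together with the unit tangent vector.

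For the integration, I would set $w=(\rho')^{2}$ and regard it as a function of $\rho$, so that $\rho''=\tfrac12\,dw/d\rho$. Equation \eqref{k1} then becomes the linear first-order ODE
\[
\rho\,\frac{dw}{d\rho}+2(\alpha-2)w+2(1-\alpha)\rho^{2}=0,
\]
which after multiplication by the integrating factor $\rho^{2(\alpha-2)}$ integrates to the first integral
\[
(\rho')^{2}=\rho^{2}+C\rho^{4-2\alpha}.
\]
The spacelike condition $\rho^{2}>(\rho')^{2}$ forces $C<0$ whenever $\alpha\neq 1$, and under a dilation $\gamma\mapsto\lambda\gamma$ one checks that $C$ is multiplied by $\lambda^{2(1-\alpha)}$, so we can normalize $C=-1$. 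Then $(\rho')^{2}=\rho^{2}(1-\rho^{2-2\alpha})$, and the substitution $\cosh\theta=\rho^{\alpha-1}$ reduces the remaining quadrature to $d\theta=(\alpha-1)\,ds$, giving $\rho(s)=\cosh((\alpha-1)s)^{1/(\alpha-1)}$ up to a translation of $s$ that I absorb by a hyperbolic rotation of $\l^{2}$. When $\alpha=1$ the first integral collapses to $(\rho')^{2}=(1+C)\rho^{2}$, and writing $c^{2}=1+C\in[0,1)$ yields $\rho(s)=e^{cs}$.

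For the four qualitative statements, the key computation is the unit tangent. Using $\rho'/\rho=\tanh((\alpha-1)s)$ together with the addition formulas for $\cosh$ and $\sinh$, one finds
\[
|\gamma'|^{2}=\frac{\rho^{2}}{\cosh^{2}((\alpha-1)s)},\qquad T(s)=\frac{\gamma'(s)}{|\gamma'(s)|}=(\cosh(\alpha s),\sinh(\alpha s)),
\]
so the Euclidean slope $dy/dx$ along $\gamma$ equals $\tanh(\alpha s)$. Combining this with the identities $y\pm x=\rho(s)e^{\pm s}$ and the asymptotic $\rho\sim c_{\alpha}\,e^{\mathrm{sgn}(\alpha-1)|s|}$ for large $|s|$ disposes of all four cases. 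In case (1), $\alpha>1$, one has $y-x\to c_{\alpha}$ as $s\to+\infty$ and $y+x\to c_{\alpha}$ as $s\to-\infty$, so both branches are asymptotic to the translate $\mathcal{C}+(0,c_{\alpha})$. Case (2), $\alpha=1$, splits as $c=0$ (giving $\h^{1}(1)$) or $0<c<1$ (giving asymptotes along $\mathcal{C}$ itself, by the same identities). In cases (3) and (4), $\rho\to 0$ while $\rho e^{|s|}$ stays bounded away from zero, so $\gamma$ terminates at finite endpoints on $\mathcal{C}$; the limit slope is $\tanh(\alpha s)\to\pm\mathrm{sgn}(\alpha)$. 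For $0<\alpha<1$ this matches the slope of the cone line through the endpoint, so $\gamma$ meets $\mathcal{C}$ tangentially; for $\alpha<0$ it is the opposite null slope, so $\gamma$ meets $\mathcal{C}$ along the other null direction, which is the natural Lorentzian replacement for an orthogonal intersection.

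The delicate step is the dichotomy between cases (3) and (4). Because $T(s)$ is spacelike of Lorentzian norm one for every $s$, it never literally becomes null even though its Euclidean direction tends to a null line; the explicit formula $T(s)=(\cosh(\alpha s),\sinh(\alpha s))$ is therefore indispensable, since it isolates the sign of $\alpha$ as the quantity that selects which of the two null directions is approached, and hence whether the approach to $\mathcal{C}$ is along the cone line through the endpoint (tangential) or transverse to it along the opposite null direction (orthogonal).
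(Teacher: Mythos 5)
Your proposal is correct, and its skeleton coincides with the paper's: hyperbolic polar coordinates, reduction to Eq.~\eqref{k1}, normalization by a hyperbolic rotation and a dilation to reach \eqref{so}, and then an asymptotic analysis of the branches. The two places where you genuinely diverge are worth noting. First, you solve \eqref{k1} by the order-reduction $w=(\rho')^2$ viewed as a function of $\rho$, obtaining the first integral $(\rho')^2=\rho^2+C\rho^{4-2\alpha}$ and normalizing $C=-1$ by a dilation; the paper instead writes down the general solution $\rho=c_2\bigl(e^{(\alpha-1)s}+c_1e^{-(\alpha-1)s}\bigr)^{1/(\alpha-1)}$ directly and normalizes $c_1$ by the rotation $R_t$ with $t=-\log(c_1)/(\alpha-1)$. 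These are equivalent (your constant transforms as $C\mapsto\lambda^{2(\alpha-1)}C$, not $\lambda^{2(1-\alpha)}C$ as you wrote, but this does not affect the normalization), and yours has the merit of making explicit why the spacelike condition forces the cosh-type combination. Second, and more substantively, your closed-form unit tangent $T(s)=(\cosh(\alpha s),\sinh(\alpha s))$ is a cleaner device than the paper's case-by-case limits of $\gamma_2'/\gamma_1'$: it shows the limiting Euclidean slope is $\tanh(\alpha s)\to\pm\,\mathrm{sgn}(\alpha)$, which is precisely what separates case (3) (slope $+1$ at the endpoint on the branch $y=x$, hence tangential) from case (4) (slope $-1$, the Euclidean-orthogonal null direction). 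The paper's proof states the same limit $\pm1$ in both cases (3) and (4) and distinguishes them only verbally, so your tangent computation actually repairs a sign ambiguity there. The remaining steps (the identities $y\pm x=\rho e^{\pm s}$, the finite limits $2^{\alpha/(1-\alpha)}$ of the endpoints, the asymptote $y=\pm x+2^{1/(1-\alpha)}$ for $\alpha>1$, and the $\alpha=1$ dichotomy) agree with the paper.
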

 
\begin{proof}
Let $\gamma=\gamma(s)=(\gamma_1(s),\gamma_2(s))$ be an $\alpha$-stationary spacelike curve   included in $\mathcal{C}^-$. Let $\rho=\rho(s)$ be a smooth function such that  
$$\gamma(s)=\rho(s)(\sinh(s), \cosh(s)).$$
We know that $\rho$ satisfies \eqref{k1} with  $\rho^2-\rho'^2>0$. This equation   can be solved,  obtaining
$$\rho(s)= \left\{\begin{array}{ll}
c_2 \left(e^{(\alpha-1)s}+c_1 e^{-(\alpha-1)s}\right)^\frac{1}{\alpha-1},&\alpha\not=1\\
c_2e^{c_1 s},&\alpha=1, c_1^2<1.
\end{array}\right.
$$
where $c_1,c_2\in\r$, $c_2\not=0$. The spacelike condition on $\gamma$ implies  $c_1>0$ if $\alpha\not=1$ and $c_1^2<1$ if $\alpha=1$.  

We separate the case $\alpha\not=1$. Consider the rigid motion $R_t(x,y)= (\cosh(t) x+\sinh(t) y,\sinh(t)x+y\cosh(t))$, where $t=-\frac{\log(c_1)}{\alpha-1}$. Then 
$$R_t\cdot \gamma(s)=\gamma(s+t)=(2\sqrt{c_1}\cosh((\alpha-1)\theta))^\frac{1}{\alpha-1}(\sinh\theta,\cosh\theta),$$
where $\theta=s+t$. After a dilation, we obtain the parametrization given in \eqref{so}. Let 
$$\rho(s)=(\cosh((\alpha-1)s))^\frac{1}{\alpha-1}.$$
As a consequence, the curve $\gamma$ is symmetric about the $y$-axis.
\begin{enumerate}
\item Case $\alpha>1$. It is clear that $\lim_{s\to\infty}\gamma(s)=\infty$. In order to prove that it is asymptotic to a lightlike cone, we have
$$\lim_{s\to\pm\infty}\frac{\gamma_2'(s)}{\gamma_1'(s)}=\pm 1,$$
which gives the slope of the asymptote. Moreover, 
$$\lim_{s\to\infty}(\gamma_2(s)-\gamma_1(s))=2^\frac{1}{1-\alpha}.$$
This implies that the lines of equation $y=\pm  x+2^\frac{1}{1-\alpha}$ are asymptotic to $\gamma$. Thus $\gamma$ is asymptotic to $(0,2^{\frac{1}{1-\alpha}})+\mathcal{C}$ which is a vertical translation of $\mathcal{C}$.
\item Case $0<\alpha<1$. A computation gives 
$$\lim_{s\to\pm\infty}\gamma(s)=(\pm 2^\frac{\alpha}{1-\alpha},2^\frac{\alpha}{1-\alpha})$$
and
$$\lim_{s\to\pm\infty}\frac{\gamma_2'(s)}{\gamma_1'(s)}=\pm 1.$$
This proves that the intersection of $\gamma$ with  $\mathcal{C}$ is tangential.
\item Case $\alpha<0$. We have 
$$\lim_{s\to\pm\infty}\gamma(s)=(\pm 2^\frac{\alpha}{1-\alpha},2^\frac{\alpha}{1-\alpha}),$$
proving $\gamma$ tends to $\mathcal{C}$ at two points. Since we also have
$$\lim_{s\to\pm\infty}\frac{\gamma_2'(s)}{\gamma_1'(s)}=\pm 1,$$
the intersection (at the limit) between $\gamma$ and $\mathcal{C}$ is orthogonal.
\end{enumerate}
Suppose now $\alpha=1$. If $c_1=0$, then   $\gamma$ is the hyperbolic circle $\h^1(-c_2^2)$.  If $c_1\not=0$, then $\lim_{s\to\pm\infty}\frac{\gamma_2'(s)}{\gamma_1'(s)}=\pm 1$. However, now we have $\lim_{s\to\pm\infty}\gamma_2(s)-(\pm)\gamma_1(s)=0$. This proves that the two branches of $\gamma$ are asymptotic to $\mathcal{C}$.

\end{proof}

\begin{figure}[h]
\begin{center}
\includegraphics[width=.3\textwidth]{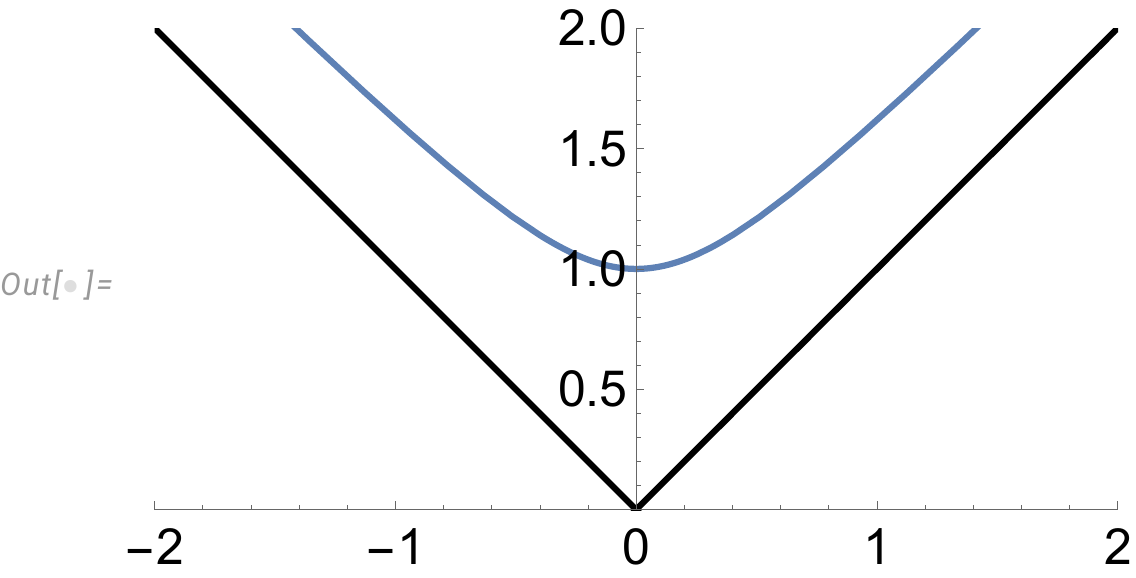}\,\includegraphics[width=.3\textwidth]{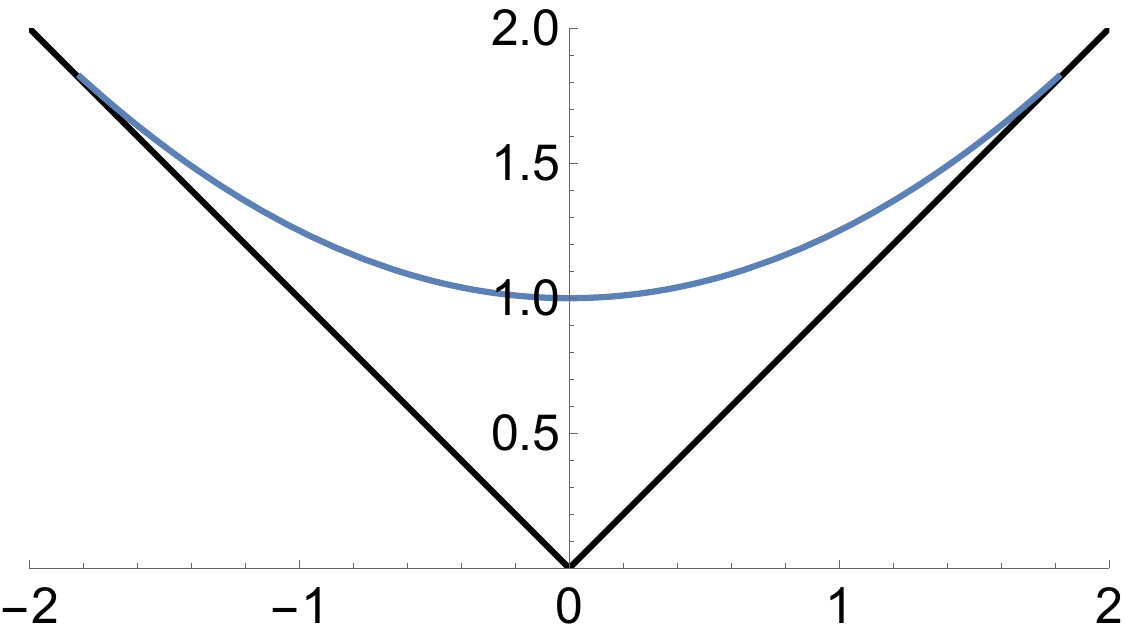}\,\includegraphics[width=.3\textwidth]{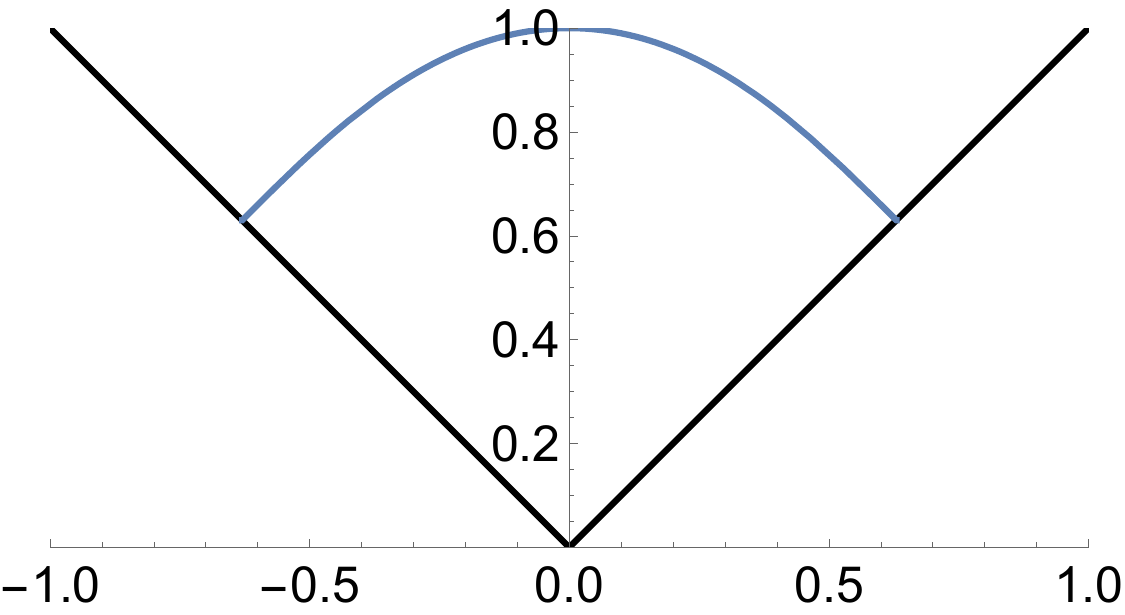}
\end{center}
\caption{  $\alpha$-stationary spacelike curves contained in $\mathcal{C}^-$: $\alpha=2$ (left), $\alpha=\frac12$ (middle) and $\alpha=-2$ (right).  }
\label{fig2}
\end{figure}

\begin{remark} 
We point out that for $\alpha>1$, the $\alpha$-stationary spacelike curves stated in Prop. \ref{pr41} are far away from the lightlike cone, consistent with Thm. \ref{t24}.
\end{remark}

The next case is to consider stationary spacelike curves contained in $\mathcal{C}^+$: see Fig. \ref{fig3}.

\begin{theorem} \label{pr42}
Up to a linear isometry and a dilation of $\l^2$, the   $\alpha$-stationary spacelike curves contained in $\mathcal{C}^+$ are parametrized   by $\gamma(s)=\rho(s)(\cosh(s),\sinh(s))$, where
\begin{equation}\label{so122}
\rho(s)=\left\{\begin{array}{ll}
 \sinh((\alpha+1) s)^\frac{-1}{\alpha+1}, &\alpha\not=-1\\
 e^{c s},&\alpha=-1, c^2>1.
 \end{array}\right.
\end{equation}
 Moreover, 
\begin{enumerate}
\item If $\alpha>0$, then a branch of  $\gamma$ touches $\mathcal{C}$  orthogonally and the other one is asymptotic to the $x$-axis. 

\item If $-1<\alpha<0$, then a branch of $\gamma$ goes to $\mathcal{C}$ tangentially.
\item If $\alpha=-1$, then a branch of $\gamma$ goes to $(0,0)$  and tangentially to  $\mathcal{C}$.  
\item If $\alpha<-1$, then a branch of $\gamma$ goes to $(0,0)$ tangentially to the $x$-axis and the other branch is asymptotic to a translation of the lightlike cone $\mathcal{C}$. 

\end{enumerate}
\end{theorem}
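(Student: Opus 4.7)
The plan is to proceed in parallel with the proof of Theorem~\ref{pr41}, working this time in the polar coordinates $\gamma(s)=\rho(s)(\cosh(s),\sinh(s))$ adapted to $\mathcal{C}^+$, in which the $\alpha$-stationary condition is encoded by Eq.~\eqref{k2} subject to the spacelike constraint $\rho'^2>\rho^2$.

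First I would integrate \eqref{k2}. Since the ODE is autonomous, treating $v=(\rho')^2$ as a function of $\rho$ converts it into the first-order linear equation
\[
\tfrac{\rho}{2}\tfrac{dv}{d\rho}=(\alpha+2)v-(1+\alpha)\rho^2.
\]
For $\alpha\neq-1$, the integrating factor $\rho^{-2(\alpha+2)}$ yields the first integral $(\rho')^2=\rho^2+C\rho^{2(\alpha+2)}$, and the spacelike constraint forces $C>0$. The substitution $w=\rho^{-(\alpha+1)}$ then reduces this to $(w')^2=(\alpha+1)^2(w^2+C)$, which integrates to $w(s)=\sqrt{C}\sinh((\alpha+1)(s-s_0))$. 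In the exceptional case $\alpha=-1$, Eq.~\eqref{k2} collapses to $\rho\rho''=\rho'^2$, i.e. $(\log\rho)''=0$, giving $\rho(s)=c_2 e^{c_1 s}$ with $c_1^2>1$. Absorbing the translation $s_0$ by a hyperbolic rotation of $\l^2$ and the factor $\sqrt{C}$ by a dilation, both of which preserve \eqref{eq1}, produces the normal forms in \eqref{so122}.

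It then remains to analyze the asymptotic behavior at the endpoints of the maximal domain of $\rho$. For $\alpha\neq-1$, up to a reflection one may take this domain to be $s>0$, so that the two \emph{branches} correspond to $s\to 0^+$ and $s\to\infty$; for $\alpha=-1$, the domain is all of $\r$ and the branches correspond to $s\to\pm\infty$. At each endpoint I would compute $\lim\gamma(s)$ --- infinite, a finite point of $\mathcal{C}$, or the origin --- together with the limiting slope $\gamma_2'/\gamma_1'$, which is governed by $\rho'/\rho=-\coth((\alpha+1)s)$ and the standard limits of $(\cosh(s),\sinh(s))$. The four regimes (1)--(4) then partition naturally according to the sign and magnitude of $\alpha+1$: whether $\rho$ blows up or decays at each endpoint, and whether the corresponding component of $\gamma$ approaches $0$, a finite constant, or $\infty$.

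The main subtlety, already present in Theorem~\ref{pr41}, is distinguishing the \emph{tangential} contact with $\mathcal{C}$ in case~(2) from the \emph{orthogonal} contact in case~(1), since in both situations $\gamma_2'/\gamma_1'\to\pm 1$ in the angular parameter. The resolution will come from combining the leading-order limit of $\gamma$ with the arclength element $|\gamma'|=\rho\,|\sinh((\alpha+1)s)|^{-1}$ and a second-order expansion of $\rho'/\rho$ near the endpoint, so as to reveal whether the curve osculates the cone or meets it transversally once reparametrized by arclength.
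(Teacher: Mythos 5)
Your proposal follows essentially the same route as the paper: hyperbolic polar coordinates, integration of \eqref{k2} (the paper merely states the solution, whereas you derive it correctly via the first integral $(\rho')^2=\rho^2+C\rho^{2(\alpha+2)}$ with $C>0$ forced by the spacelike condition), normalization by a boost and a dilation, and endpoint analysis through $\lim\gamma(s)$ and $\lim\gamma_2'/\gamma_1'$. The tangential-versus-orthogonal distinction is resolved exactly as you anticipate: the second-order expansion of $\rho'/\rho=-\coth((\alpha+1)s)$ gives limiting slope $-1$ for $\alpha>0$ and $+1$ for $-1<\alpha<0$, which is all the paper uses.
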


\begin{proof}
Since the curve is contained in $\mathcal{C}^+$, in hyperbolic polar coordinates, we have $\gamma(s)=\rho(s)(\cosh(s),\sinh(s))$. Then  $\gamma$ satisfies \eqref{k2} with  $\rho'^2-\rho^2>0$.   The solution is 
\begin{equation}\label{so12}
\rho(s)= \left\{\begin{array}{ll}
c_2 \left(e^{(\alpha+1)s}+c_1 e^{-(\alpha+1)s}\right)^\frac{-1}{\alpha+1},&\alpha\not=-1, c_1<0\\
c_2e^{c_1 s},&\alpha=-1, c_1^2>1,
\end{array}\right.
\end{equation}
where $c_2\in\r$. 

Suppose $\alpha\not=-1$.  A similar argument to that of Thm. \ref{pr41} concludes that, after  a rigid motion  and a dilation of $\l^2$, the function $\rho$ is given by \eqref{so122}.  The curve $\gamma$ is defined provided $(1+\alpha)s>0$.
 
If $\alpha>0$, the domain of   $\gamma$ is $(0,\infty)$ with
\begin{equation}\label{ll}
\lim_{s\to\infty}\gamma(s)=(2^\frac{\alpha}{\alpha+1},2^\frac{\alpha}{\alpha+1}),\quad \lim_{s\to 0}\gamma(s)=(\infty,0).
\end{equation}
The first limit in \eqref{ll} proves that $\gamma$ tends to $\mathcal{C}$. Since $\lim_{s\to\infty}\frac{\gamma_2'(s)}{\gamma_1'(s)}=-1$, the intersection is orthogonal. The second limit in \eqref{ll} proves that $\gamma$ is asymptotic to the $x$-axis.

Suppose $-1<\alpha<0$. Again the domain of $\gamma$ is $(0,\infty)$  with 
$$\lim_{s\to\infty}\gamma(s)=(2^\frac{-\alpha}{\alpha+1},2^\frac{-\alpha}{\alpha+1}),\quad \lim_{s\to 0}\gamma(s)=(\infty,0).$$
The difference with the item (1) is that $\lim_{s\to\infty}\frac{\gamma_2'(s)}{\gamma_1'(s)}=1$ which proves that $\gamma$ tends to $\mathcal{C}$ tangentially.
 
If $\alpha<-1$, then $\gamma$ is defined in $(-\infty,0)$. We have $\lim_{s\to 0}\gamma(s)=(0,0)$.  Since $\lim_{s\to\infty}\frac{\gamma_2'(s)}{\gamma_1'(s)}=0$, the curve $\gamma$ goes to the origin tangentially to the $x$-axis. On the other hand, 
$$\lim_{s\to-\infty}\frac{\gamma_2'(s)}{\gamma_1'(s)}=-1,\quad \lim_{s\to-\infty} (\gamma_2(s)- \gamma_1(s))=2^\frac{-1}{\alpha+1},$$
which proves that this branch of $\gamma$ is asymptotic to the line $y=-x+2^\frac{-1}{\alpha+1}$.

Finally, we consider the case $\alpha=-1$. We now have
$$\lim_{s\to-\infty} \gamma (s) =(0,0),\quad \lim_{s\to-\infty}\frac{\gamma_2'(s)}{\gamma_1'(s)}=-1.$$
This proves that $\gamma$ goes to $(0,0)$ tangentially to vertex of the lightlike cone $\mathcal{C}$.
\end{proof}

\begin{figure}[h]
\begin{center}
\includegraphics[width=.3\textwidth]{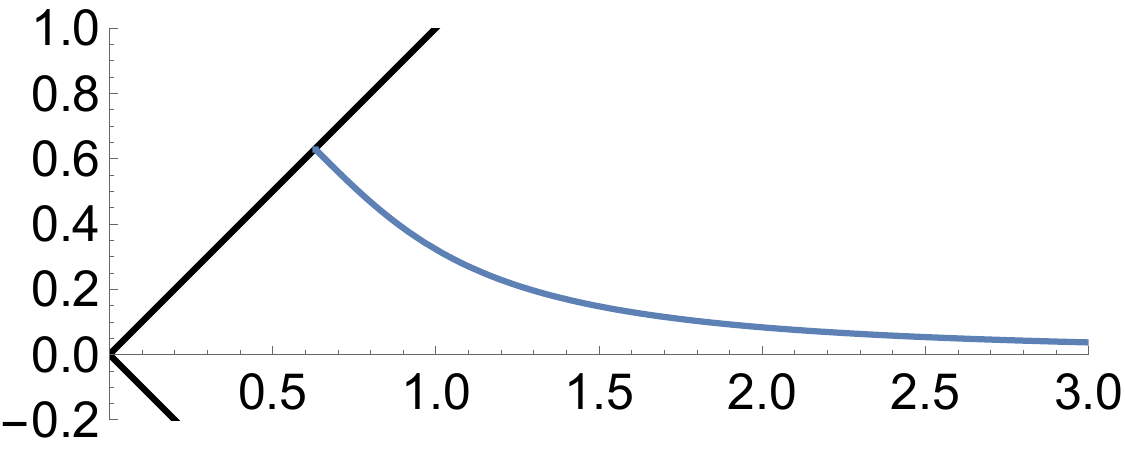}\,\includegraphics[width=.3\textwidth]{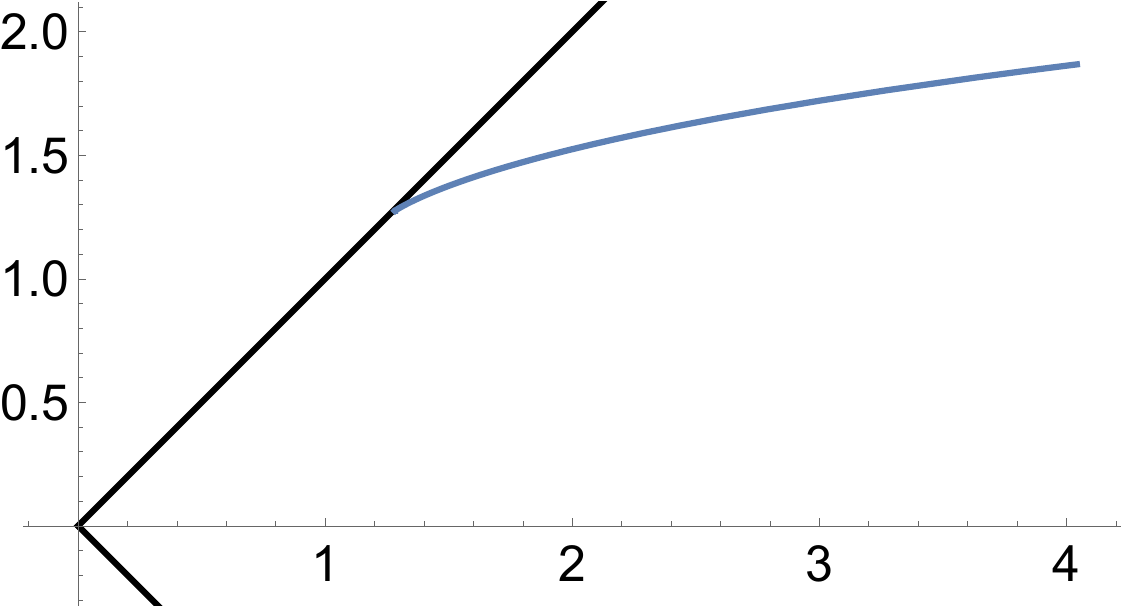}\,\includegraphics[width=.3\textwidth]{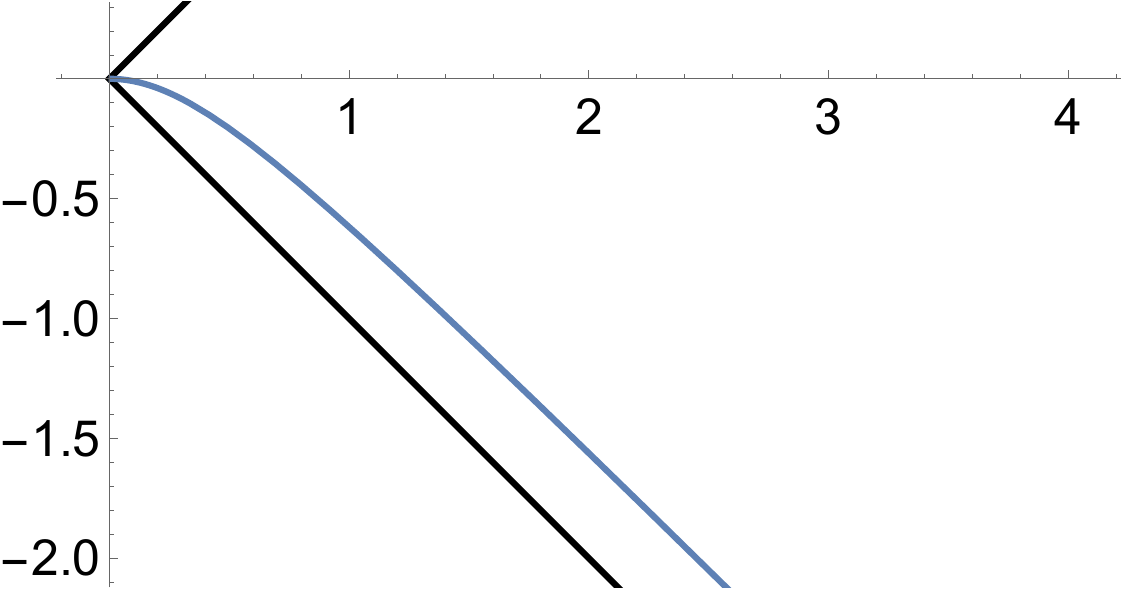}
\end{center}
\caption{  $\alpha$-stationary spacelike curves contained in $\mathcal{C}^+$: $\alpha=2$ (left), $\alpha=-\frac14$ (middle) and $\alpha=-2$ (right).  }
\label{fig3}
\end{figure}

We obtain the explicit parametrizations of  stationary timelike curves of $\l^2$ by using  Thm. \ref{t31}. Thm. \ref{pr41} which  give the stationary timelike curves contained in $\mathcal{C}^+$: see Fig. \ref{fig4}

\begin{theorem} \label{prt-}
Up to a linear isometry and a dilation of $\l^2$, the  $\alpha$-stationary timelike curves contained in $\mathcal{C}^+$ are parametrized  by $\gamma(s)=\rho(s)(\cosh(s),\sinh(s))$, where  
\begin{equation}\label{sot2}
\rho(s)=\left\{\begin{array}{ll}
\cosh((\alpha-1) s)^\frac{1}{\alpha-1},&\alpha\not=1\\
e^{c s},&\alpha=1, c>1.
\end{array}\right.
\end{equation}
 Moreover, 
\begin{enumerate}
\item If $\alpha> 1$, then   $\gamma$ has the two  branches asymptotic to a vertical translation of the lightlike cone $\mathcal{C}$.
\item If $\alpha=1$, then $\gamma$ is a pseudocircle $\s^1_1(r)$ centered at the origin or the two branches of $\gamma$ are asymptotic to $\mathcal{C}$.
\item If $0<\alpha<1$, then $\gamma$ meets tangentially $\mathcal{C}$ at two points.  
\item If $\alpha<0$, then $\gamma$ intersects orthogonally $\mathcal{C}$ at two points.
\end{enumerate}
\end{theorem}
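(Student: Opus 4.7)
The plan is to reduce Theorem \ref{prt-} to Theorem \ref{pr41} via the involution $\Phi(x,y) = (y,x)$ from Section \ref{s3}. By Theorem \ref{t31} and the identity $\Phi^2 = 1_{\l^2}$, every $\alpha$-stationary timelike curve $\widetilde{\gamma} \subset \mathcal{C}^+$ arises uniquely as $\widetilde{\gamma} = \Phi \circ \gamma$ for an $\alpha$-stationary spacelike curve $\gamma \subset \mathcal{C}^-$. Theorem \ref{pr41} parametrizes such $\gamma$, up to linear isometry and dilation, as $\gamma(s) = \rho(s)(\sinh s, \cosh s)$ with $\rho$ given by \eqref{so}. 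Since $\Phi(\sinh s, \cosh s) = (\cosh s, \sinh s)$, applying $\Phi$ yields $\widetilde{\gamma}(s) = \rho(s)(\cosh s, \sinh s)$, establishing the parametrization \eqref{sot2} for $\alpha \neq 1$. For $\alpha = 1$, the radial function still has the form $\rho(s) = e^{cs}$, but the admissible range of $c$ must be re-derived by computing $\langle \widetilde{\gamma}', \widetilde{\gamma}'\rangle = \rho'^2 - \rho^2$ in the new parametrization and imposing the timelike sign.

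Next, I would transport the geometric conclusions (1)--(4) of Theorem \ref{pr41} through $\Phi$. The key facts are: (a) $\Phi(\mathcal{C}) = \mathcal{C}$, since $\langle \Phi p, \Phi p\rangle = -\langle p, p\rangle$; (b) $\Phi(\mathcal{C} + p_0) = \mathcal{C} + \Phi(p_0)$, so translations of the cone map to translations of the cone; and (c) $\Phi$ preserves lightlike vectors and Lorentzian orthogonality, because $\langle \Phi u, \Phi v\rangle = 0$ if and only if $\langle u, v\rangle = 0$. Consequently, each of the limiting behaviors \emph{asymptotic to a translation of $\mathcal{C}$}, \emph{tangentially meeting $\mathcal{C}$}, and \emph{orthogonally meeting $\mathcal{C}$} is preserved by $\Phi$. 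This yields items (1), (3), and (4) at once from the corresponding items of Theorem \ref{pr41}.

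For item (2) in the case $\alpha = 1$, the hyperbolic circle $\h^1(r)$ maps under $\Phi$ onto the pseudocircle $\s^1_1(r)$, since $\langle \Phi p, \Phi p\rangle = -\langle p, p\rangle = r^2$ precisely when $\langle p, p\rangle = -r^2$. The alternative subcase, in which both branches of $\gamma$ are asymptotic to $\mathcal{C}$, transfers verbatim because $\Phi(\mathcal{C}) = \mathcal{C}$.

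The step requiring the most care is the $\alpha = 1$ case, where the admissible range of the exponent $c$ changes as the causal character switches from spacelike in $\mathcal{C}^-$ to timelike in $\mathcal{C}^+$; the remainder of the argument is a routine transport of the geometry through the anti-isometry $\Phi$.
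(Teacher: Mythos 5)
Your proposal is correct and follows essentially the same route as the paper: the published proof also reduces to the spacelike case via Theorem \ref{t31} and the hyperbolic polar ODE \eqref{k1}, so transporting the solutions of Theorem \ref{pr41} through the anti-isometry $\Phi$ is exactly the intended argument, including your transfer of the asymptotic/tangential/orthogonal behaviour. One remark on the step you flag: carrying out the $\alpha=1$ computation gives $\langle\gamma',\gamma'\rangle=(c^2-1)e^{2cs}$ for $\gamma(s)=e^{cs}(\cosh s,\sinh s)$, so the timelike condition is $c^2<1$ (consistent with pushing $c^2<1$ from \eqref{so} through $\Phi$), not the $c>1$ printed in the statement; similarly $\Phi$ carries the vertical translation $(0,2^{\frac{1}{1-\alpha}})+\mathcal{C}$ of Theorem \ref{pr41}(1) to a horizontal translation of $\mathcal{C}$. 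These are misprints in the theorem as stated rather than gaps in your argument, and your method, executed carefully, actually corrects them.
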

 
\begin{proof}
Theorem  \ref{t31} and the computations before Thm. \ref{ti} imply that in hyperbolic polar coordinates, the function $\rho$ satisfies \eqref{k1}.   Now the timelike condition on $\gamma$ implies that if $\alpha=1$, then the constant $c$ satisfies $c>1$.  
  
  \end{proof}

\begin{figure}[h]
\begin{center}
\includegraphics[width=.3\textwidth]{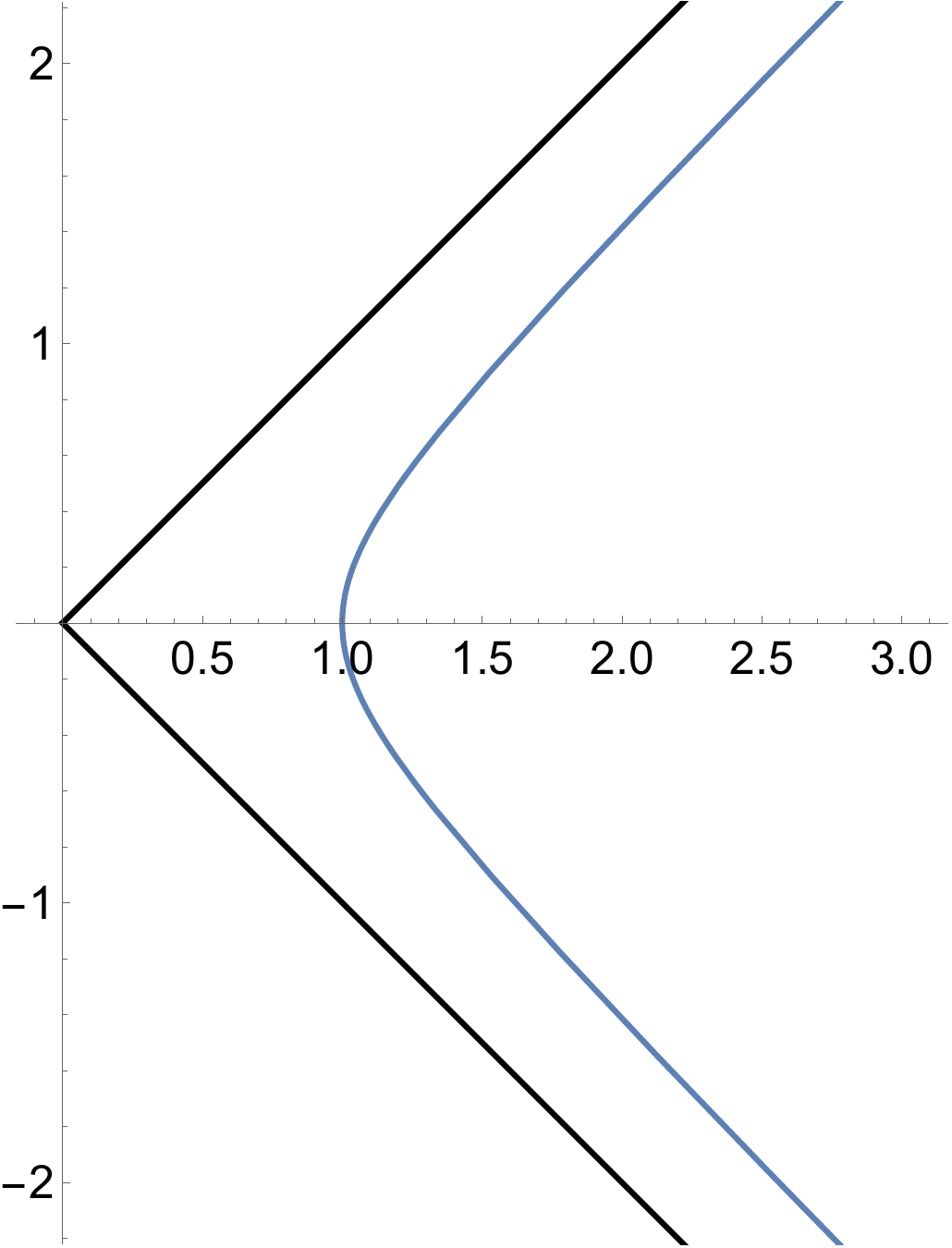}\,\includegraphics[width=.3\textwidth]{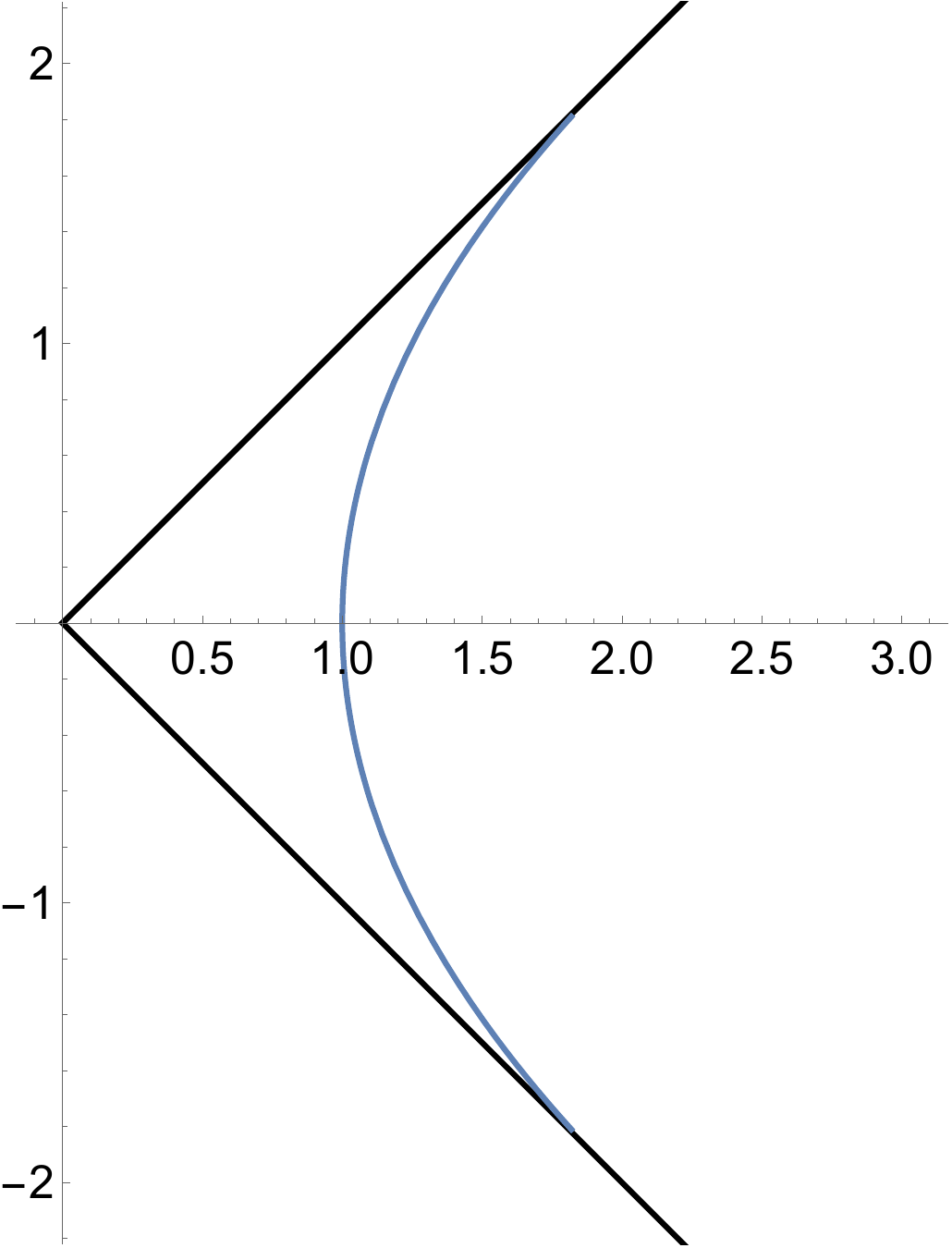}\,\includegraphics[width=.3\textwidth]{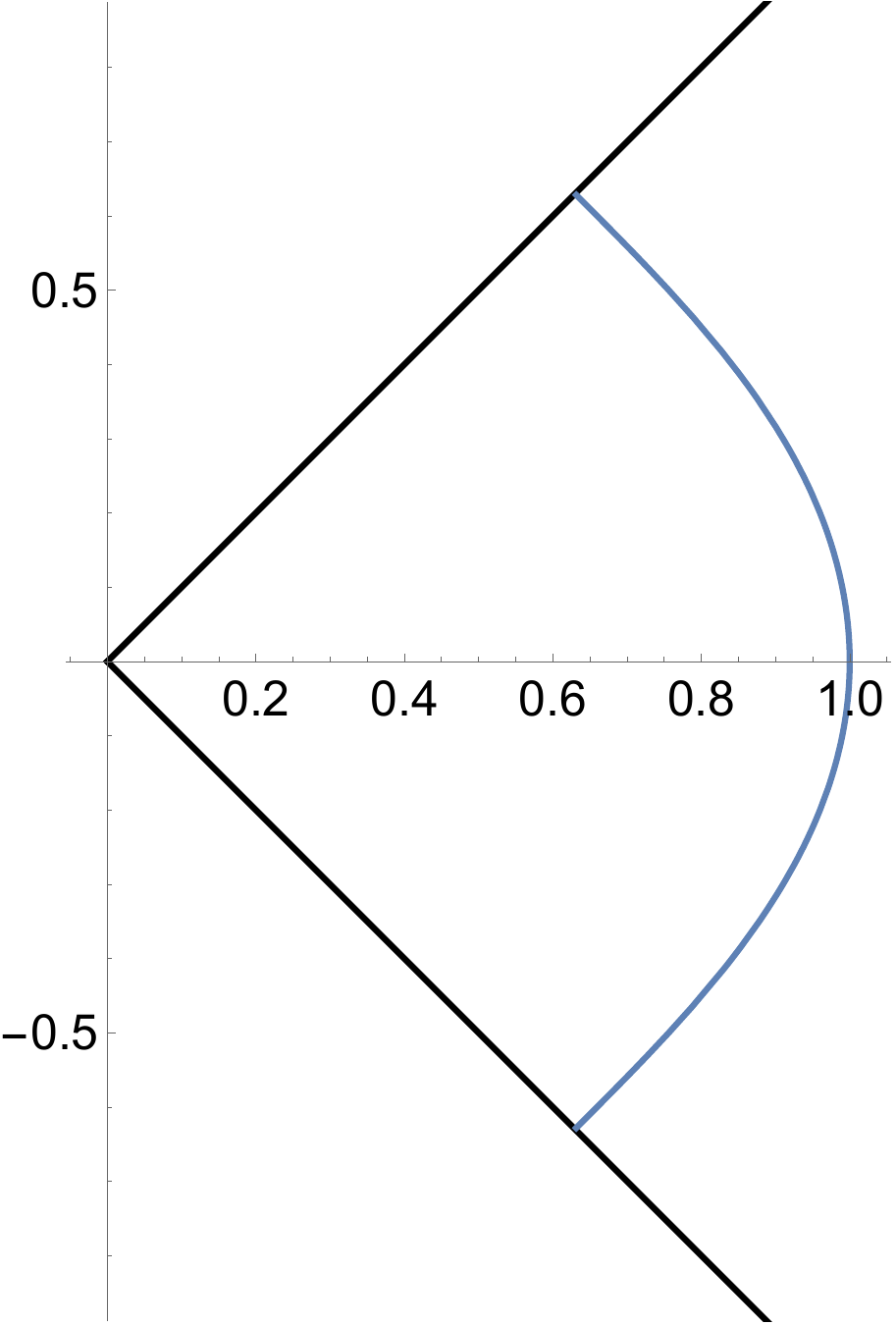}
\end{center}
\caption{ $\alpha$-stationary timelike curves contained in $\mathcal{C}^+$: $\alpha=2$ (left), $\alpha=\frac12$ (middle) and $\alpha=-2$ (right).  }
\label{fig4}
\end{figure}

Stationary timelike curves contained in $\mathcal{C}^-$ are obtained from Thm. \ref{pr42}: see Fig. \ref{fig5}. Now the function $\rho$ in hyperbolic polar coordinates satisfies Eq. \eqref{k2}.

\begin{theorem} \label{prt+}
Up to a linear isometry and a dilation of $\l^2$, the   $\alpha$-stationary timelike curves contained in $\mathcal{C}^-$ are parametrized   by 
\begin{equation}\label{so121}
\rho(s)=\left\{\begin{array}{ll}
\sinh((\alpha+1) s)^\frac{-1}{\alpha+1},&\alpha\not=-1\\
e^{c s},&\alpha=1, c^2>1.
\end{array}
\right.
\end{equation}
 Moreover, 
\begin{enumerate}
\item If $\alpha>0$, then a branch of  $\gamma$ touches $\mathcal{C}$  orthogonally and the other one is asymptotic to the $y$-axis. 
\item If $-1<\alpha<0$, then a branch of $\gamma$ goes to $\mathcal{C}^-$ tangentially.
\item If $\alpha=-1$, then a branch of $\gamma$ goes to $(0,0)$ and tangentially to   $\mathcal{C}$.  
\item If $\alpha<-1$, then a branch of $\gamma$ goes to $(0,0)$ tangentially to the $y$-axis and the other branch is asymptotic to a translation of the lightlike cone.
\end{enumerate}
 \end{theorem}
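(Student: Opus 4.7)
The plan is to derive this classification directly from Theorem \ref{pr42} via the involution $\Phi(x,y)=(y,x)$ of Theorem \ref{t31}, rather than solve Eq. \eqref{k2} from scratch. By Theorem \ref{t31}, $\Phi$ sets up a bijection between $\alpha$-stationary spacelike curves contained in $\mathcal{C}^+$ and $\alpha$-stationary timelike curves contained in $\mathcal{C}^-$, preserving the value of $\alpha$. Consequently, every such timelike curve, up to a linear isometry and a dilation of $\l^2$, is the image under $\Phi$ of one of the curves listed in Theorem \ref{pr42}.

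Concretely, if $\widetilde{\gamma}(s)=\rho(s)(\cosh s,\sinh s)$ is the normal form produced by Theorem \ref{pr42}, then its image is
$$\gamma(s)=\Phi\circ\widetilde{\gamma}(s)=\rho(s)(\sinh s,\cosh s).$$
A short computation gives $\langle\gamma,\gamma\rangle=-\rho^2<0$ and $\langle\gamma',\gamma'\rangle=\rho^2-\rho'^2$, so $\gamma(I)\subset\mathcal{C}^-$, and the timelike condition $\rho'^2-\rho^2>0$ coincides with the spacelike condition $\rho'^2-\rho^2>0$ used in Theorem \ref{pr42}. As noted in the text just before Theorem \ref{ti}, the function $\rho$ for a timelike curve in $\mathcal{C}^-$ satisfies Eq. \eqref{k2}, which is exactly the ODE treated in Theorem \ref{pr42}. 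Therefore the explicit formulas in \eqref{so121} are inherited verbatim from \eqref{so122}, and the restriction $c^2>1$ in the $\alpha=-1$ case comes from applying $\rho'^2>\rho^2$ to the exponential solution $\rho=e^{cs}$.

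For the qualitative statements (1)--(4), I would simply transport the corresponding conclusions of Theorem \ref{pr42} through $\Phi$. Since $\Phi$ swaps the $x$-axis and $y$-axis, fixes the origin, preserves $\mathcal{C}$ as a set, and sends translates of $\mathcal{C}$ to translates of $\mathcal{C}$, the only change in wording is that ``asymptotic to the $x$-axis'' in Theorem \ref{pr42} becomes ``asymptotic to the $y$-axis'' here. Tangential and orthogonal contact with $\mathcal{C}$ are preserved because $\Phi$ carries null directions to null directions and non-null directions to non-null directions (of the opposite causal type), and the limits $\lim\gamma_2'/\gamma_1'$ computed in the proof of Theorem \ref{pr42} are inverted under $\Phi$, which exactly matches the swap of axes.

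The main obstacle, which is mild, is purely bookkeeping: one must verify case by case that each of the four asymptotic behaviours of Theorem \ref{pr42} transfers correctly. In particular, for $\alpha<-1$ the branch going to $(0,0)$ tangentially to the $x$-axis becomes one going tangentially to the $y$-axis, and an asymptote of the form $y=-x+c$ is sent by $\Phi$ to $x=-y+c$, i.e. another line of the same slope, so that the assertion ``asymptotic to a translate of the lightlike cone'' remains valid after the transport.
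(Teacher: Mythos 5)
Your proposal is correct and follows essentially the same route as the paper, which likewise obtains the timelike curves in $\mathcal{C}^-$ by transporting Theorem \ref{pr42} through the involution $\Phi$ of Theorem \ref{t31} and observing that $\rho$ satisfies Eq. \eqref{k2}. Your verification that $\langle\gamma',\gamma'\rangle=\rho^2-\rho'^2$, that slopes are inverted under $\Phi$ (so lightlike directions and orthogonality are preserved while the $x$-axis asymptote becomes a $y$-axis asymptote), and that translates of $\mathcal{C}$ go to translates of $\mathcal{C}$ supplies exactly the bookkeeping the paper leaves implicit.
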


 \begin{figure}[h]
\begin{center}
\includegraphics[width=.2\textwidth]{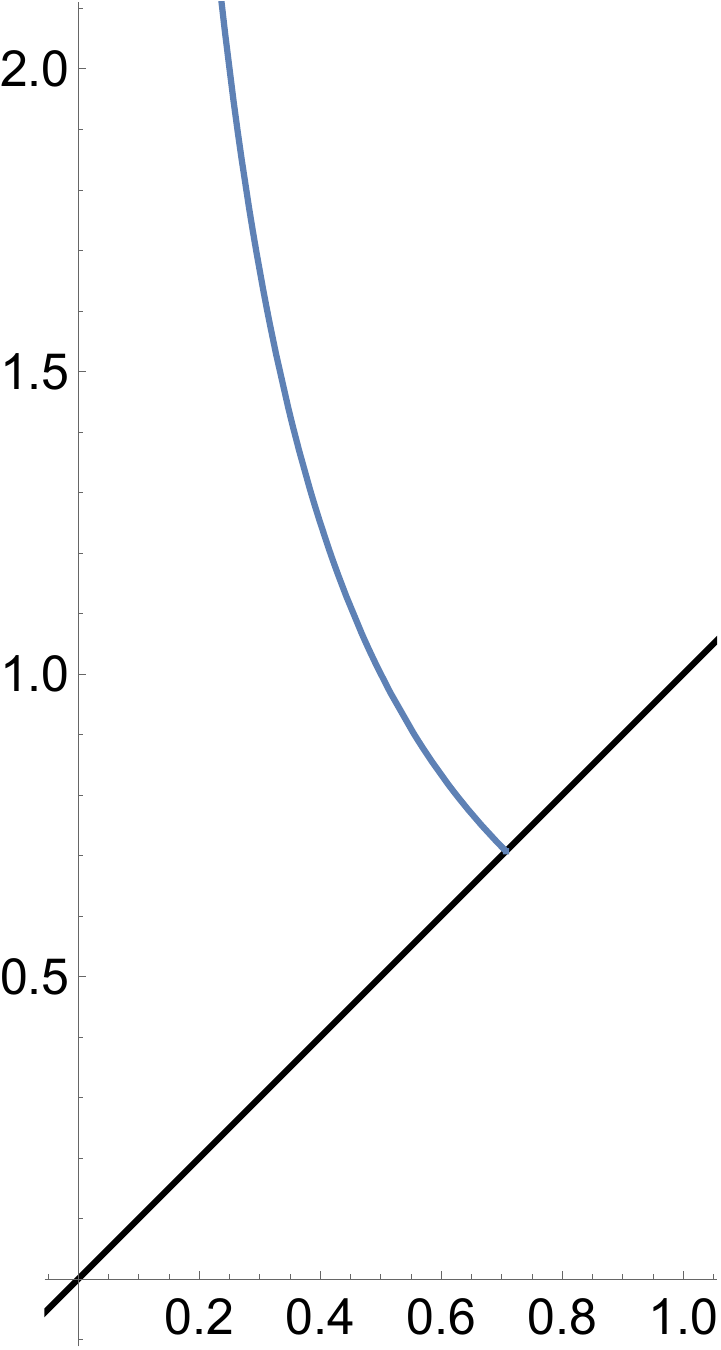}\,,\includegraphics[width=.3\textwidth]{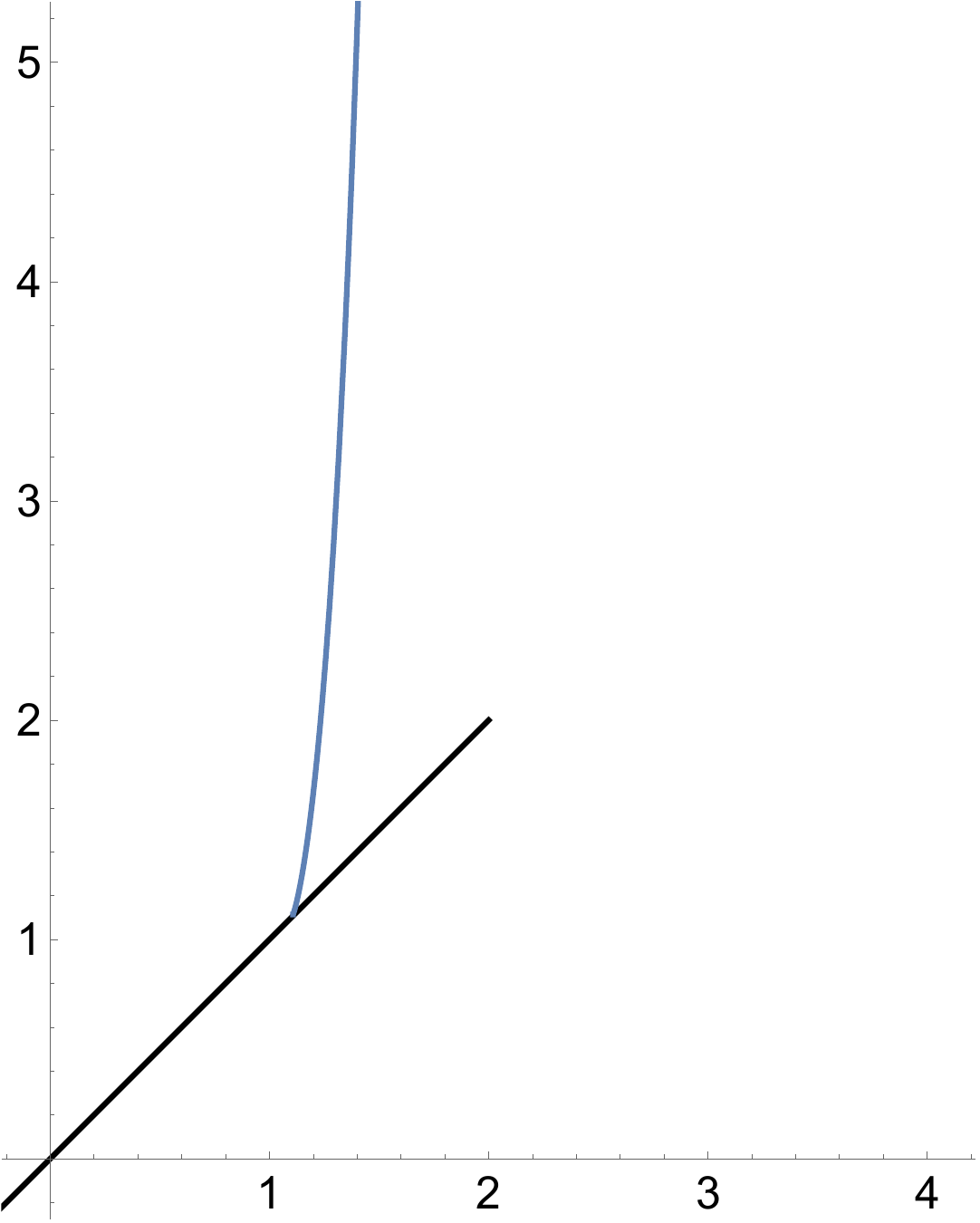}\,\includegraphics[width=.3\textwidth]{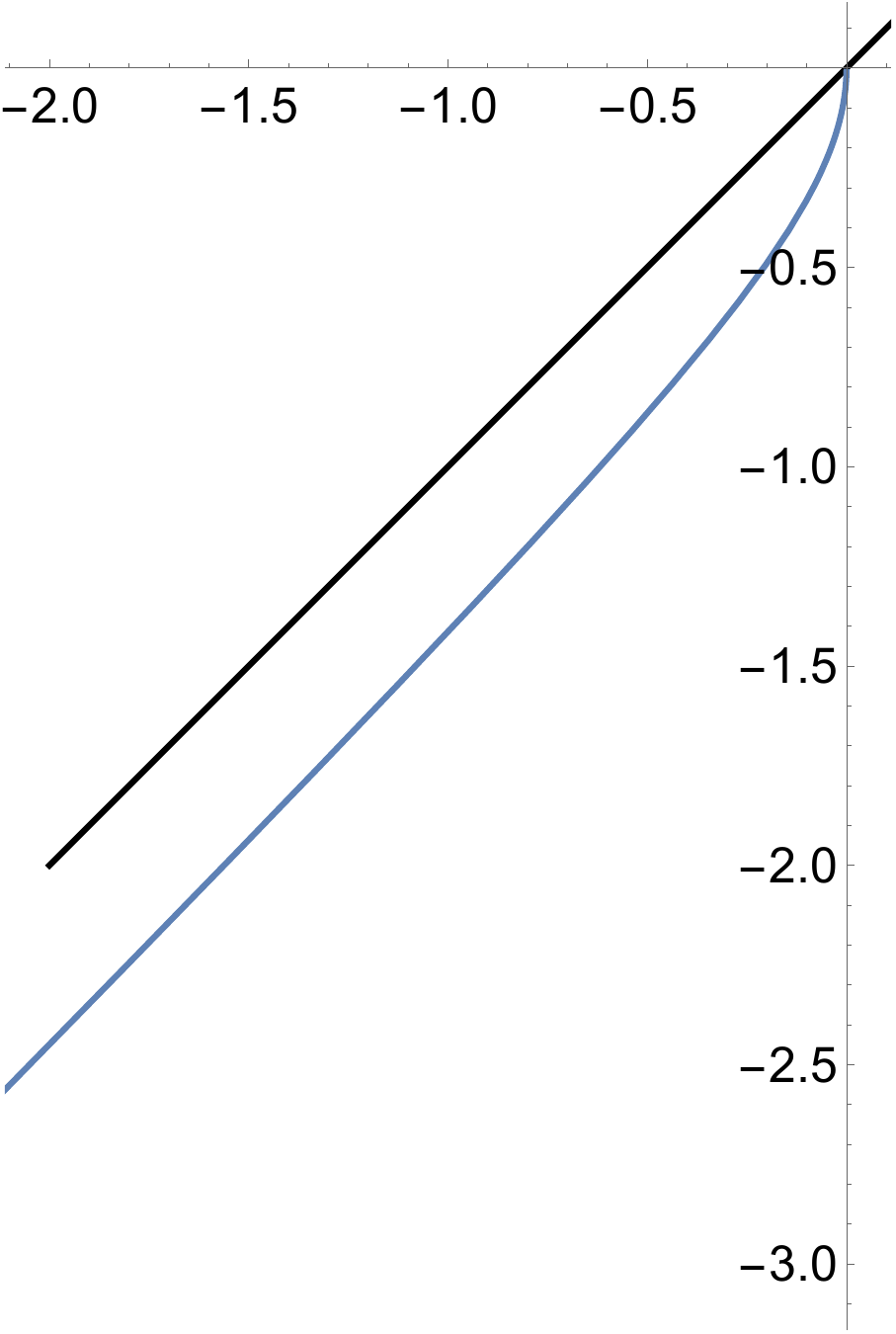}
\end{center}
\caption{  $\alpha$-stationary timelike curves contained in $\mathcal{C}^-$: $\alpha=1$ (left), $\alpha=-\frac18$ (middle) and $\alpha=-2$ (right).  }
\label{fig5}
\end{figure}

As we mentioned in Introduction, for spacelike curves, the energy \eqref{Eu1}, when $\alpha=0$, corresponds with the length of the curve. Critical points are straight lines. According to Thm. \ref{ti} the inverse curves of straight lines are $2$-stationary curves if the curve is included in $\mathcal{C}^-$ and $(-2)$-stationary curves if $\gamma$ is included in $\mathcal{C}^+$.

\begin{corollary} Up to a linear isometry and a dilation of $\l^2$, the inverse curves of spacelike straight lines are:
\begin{enumerate}
\item The curve $\gamma(s)=(\cosh(s)\sinh(s),\cosh(s)^2)$, if it is contained in $\mathcal{C}^-$. This is a $2$-stationary curve.  
\item The curve $\gamma(s)=(-\sinh(s)\cosh(s),-\sinh(s)^2)$, if it is contained in $\mathcal{C}^+$. This is a $(-2)$-stationary curve. 
\end{enumerate}
\end{corollary}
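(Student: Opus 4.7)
The plan is to reduce to a single canonical spacelike line using the invariance of \eqref{eq1} under linear isometries and dilations, apply the inversion $\Psi$ directly, and identify the resulting parametrization by a hyperbolic substitution; the value of $\alpha$ is then read off from Thm.~\ref{ti} once one notes that every spacelike straight line is $0$-stationary.

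For the reduction, any spacelike straight line $\ell$ in $\l^2$ not containing the origin can, after a Lorentzian boost aligning its spacelike tangent with the $x$-axis together with a reflection and a dilation normalizing the $y$-intercept, be sent to $\ell_0=\{y=1\}$. Parametrizing $\ell_0$ by $\gamma_0(s)=(s,1)$, one has $\langle \gamma_0,\gamma_0\rangle = s^2-1$, so $\ell_0$ meets $\mathcal{C}$ at $s=\pm 1$ and decomposes into a component $|s|<1$ contained in $\mathcal{C}^-$ together with two components $|s|>1$ contained in $\mathcal{C}^+$ which are interchanged by the linear isometry $(x,y)\mapsto(-x,y)$. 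It therefore suffices to treat one component of each type.

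The inversion gives $\widetilde\gamma(s)=\Psi(\gamma_0(s)) = (s,1)/(s^2-1)$. For the $\mathcal{C}^-$-component I would substitute $s=\tanh(t)$ so that $s^2-1 = -1/\cosh^2(t)$; a short computation then yields
\[
\widetilde\gamma(t) = -\cosh^2(t)\bigl(\tanh(t), 1\bigr) = \bigl(-\cosh(t)\sinh(t),\,-\cosh^2(t)\bigr),
\]
which matches item~(1) after the linear isometry $(x,y)\mapsto(-x,-y)$. For a $\mathcal{C}^+$-component I would substitute $s=\coth(t)$ so that $s^2-1 = 1/\sinh^2(t)$; then
\[
\widetilde\gamma(t) = \sinh^2(t)\bigl(\coth(t), 1\bigr) = \bigl(\sinh(t)\cosh(t),\,\sinh^2(t)\bigr),
\]
matching item~(2) after the same linear isometry.

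Finally, a straight line has $\kappa\equiv 0$, and since $\ell_0$ does not pass through the origin the support function $\langle N,\gamma_0\rangle$ is not identically zero; so \eqref{eq1} forces $\alpha=0$, i.e.\ $\gamma_0$ is $0$-stationary. Applying Thm.~\ref{ti}(1) to the $\mathcal{C}^-$-piece then yields the $(2-0)=2$-stationary curve of item~(1), and Thm.~\ref{ti}(2) applied to a $\mathcal{C}^+$-piece yields the $(-2-0)=-2$-stationary curve of item~(2). The only nonroutine step is choosing the right hyperbolic substitution so that the rational image $\Psi(\gamma_0)$ collapses to the compact hyperbolic form stated in the corollary; everything else is a direct calculation followed by a straight appeal to Thm.~\ref{ti}.
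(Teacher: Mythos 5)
Your proposal is correct and follows essentially the same route as the paper: reduce to the $0$-stationary line $y=1$, apply the inversion $\Psi$, and read off the shifted exponents $2-0$ and $-2-0$ from Thm.~\ref{ti}. The only (harmless) difference is that you obtain the explicit hyperbolic parametrizations by computing $\Psi(s,1)$ directly with the substitutions $s=\tanh t$, $s=\coth t$, whereas the paper reads them off from the classification in Thms.~\ref{pr41} and \ref{pr42} with $\alpha=2$ and $\alpha=-2$; both computations agree, and your explicit exclusion of lines through the origin is a sensible clarification of the statement.
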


 As a consequence of the  classification of stationary curves, there are non-degenerate curves in $\l^2$ with mix causal character satisfying Eq. \eqref{eq1} for the same value of $\alpha$. These curves orthogonally cross the lightlike cone, as shown in the following example.
 
 \begin{example} Let  $\alpha=-2$. Consider $\gamma^+=\gamma^+(s)$ and $\gamma^{-}=\gamma^{-}(s)$ to be the $(-2)$-stationary spacelike and timelike curves  given in \eqref{so} and \eqref{sot2}, respectively. Both curves tend to $\mathcal{C}$ orthogonally at finite points which coincide for both curves: see Figs. \ref{fig2} and \ref{fig4}, respectively. Then we can end $\gamma^{-}$ at $\mathcal{C}$ and continue with $\gamma^{+}$. For this purpose, we reparametrize the domains of both curves changing  the domain $(-\infty,\infty)$ of $\gamma^+$  by $(-\infty,0)$ and the  domain $(-\infty,\infty)$   of $\gamma^{-}$  by $(0,\infty)$. Then we have
$$\gamma(s)=\left\{\begin{array}{ll}
\gamma^+(-\log(-s)),&s\in (-\infty,0)\\
( 2^\frac{-2}{3},2^\frac{-2}{3}),& s=0\\
\gamma^-(-\log(s)),&s\in (0,\infty).
\end{array}\right.$$ 
In addition, if we   take their symmetries about the $x$-axis and the $y$-axis, we obtain a closed curve with mix causal character and satisfying Eq.  \eqref{eq1} for the same value of $\alpha=-2$. See Fig. \ref{fig6}, left.

A similar process can be done when $\alpha>0$ and for $\alpha$-stationary spacelike curves contained in $\mathcal{C}^+$ and $\alpha$-stationary timelike curves contained in $\mathcal{C}^-$. Again, we obtain a curve with mix causal character satisfying Eq. \eqref{eq1} for the same value $\alpha$. This curve is tangent to the $x$-axis and the $y$-axis. See Fig. \ref{fig6}, right.
\end{example}  
\begin{figure}[h]
\begin{center}
\includegraphics[width=.4\textwidth]{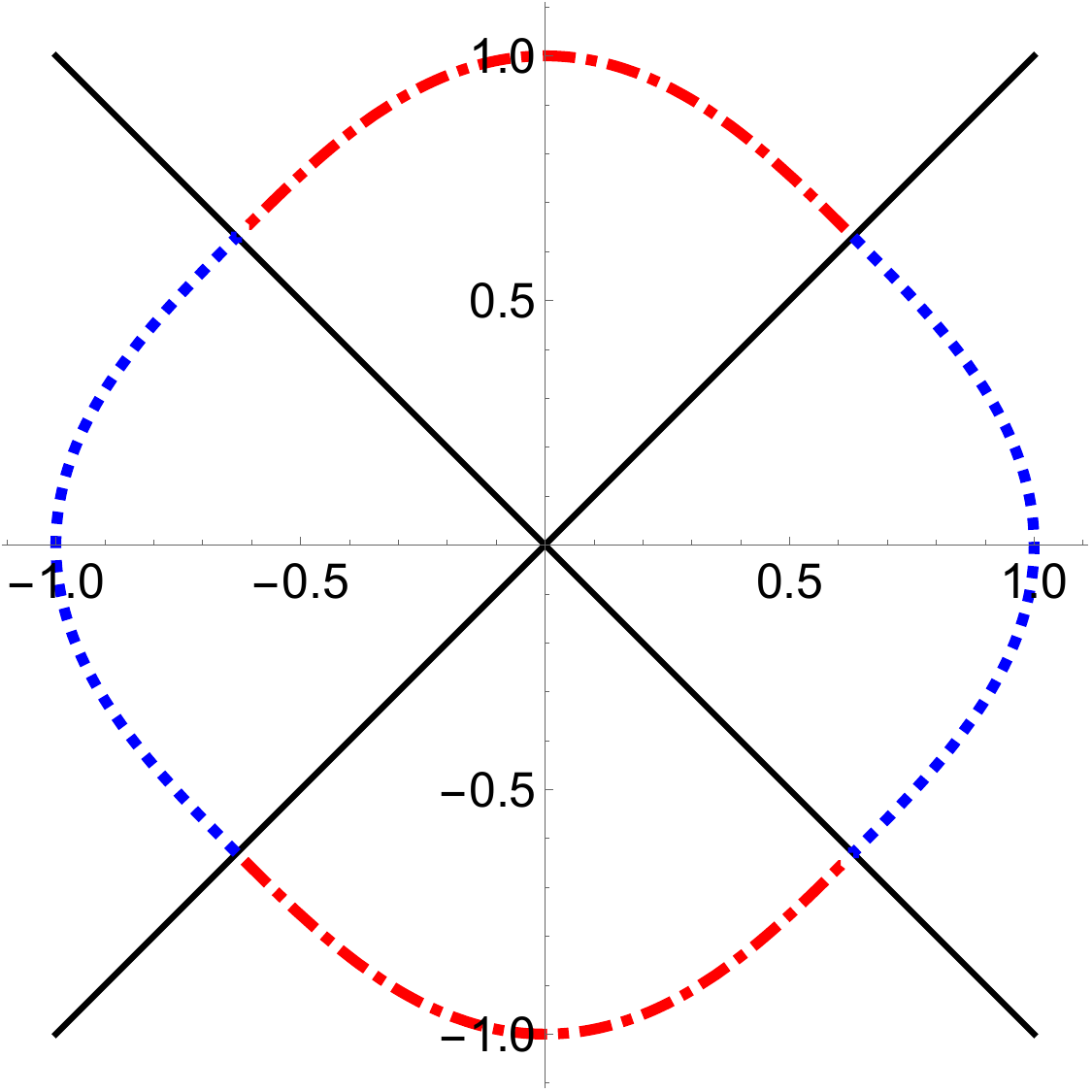}\qquad \includegraphics[width=.4\textwidth]{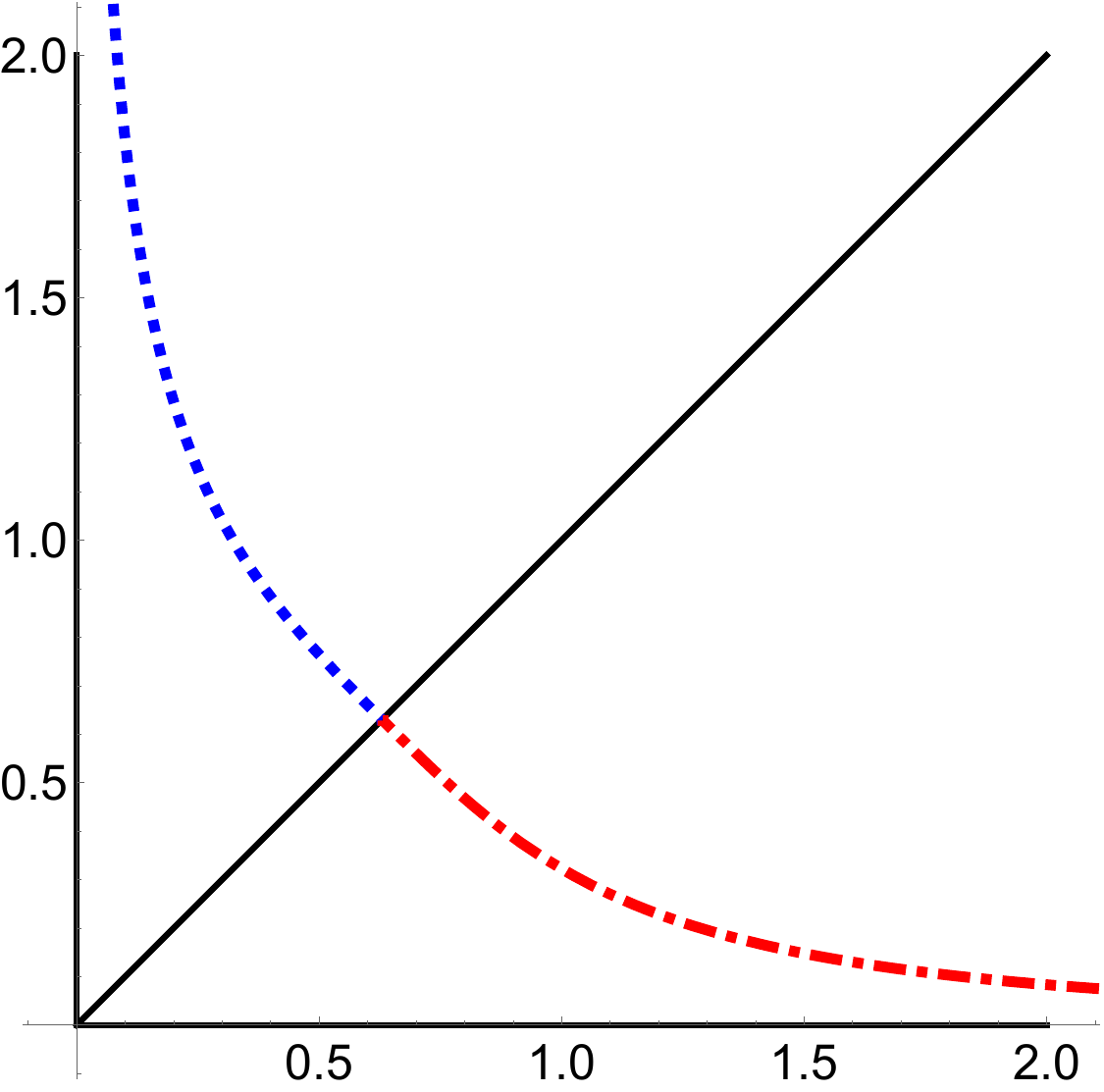}
\end{center}
\caption{Stationary curves with mix causal character: $\alpha=-2$ (left) and $\alpha=2$ (right).    The red dashed curve  is spacelike and the blue dotted curve is timelike.  }
\label{fig6}
\end{figure}
  \section{The problem of maximizing the energy}\label{s5}
  
We go back to the initial Euler's problem in $\l^2$:  given two points $P_1, P_2\in\l^2$, find a spacelike curve $\gamma$ which maximizes the energy $E_\alpha$. As usually, the first approach  is finding the critical points of this energy joining both points, which are potential maximizers of $E_\alpha$. In $\r^2$, the problem of finding minimizers of $E_\alpha$ was solved by Mason for the value $\alpha=2$ \cite{ma} and a general approach for all values of $\alpha$ has been recently obtained in \cite{dl1}. In these results, techniques as the  extremal fields  in the sense of Weierstrass are employed \cite{gh}.  
  
  A simple but illustrative case is when $P_1$ and $P_2$ are collinear with the origin $0=(0,0)$. A possible maximizer is the straight line joining both points. Since we focus on the spacelike case, this line is assumed to be spacelike. Thus,   both points must be contained in $\mathcal{C}^+$; otherwise, the line joining $P_1$ and $P_2$ would meet  $\mathcal{C}$, which we want to avoid. Let $S_{12}$ denote the segment joining $P_1$ and $P_2$. Under this situation, we prove the following result.

\begin{theorem} \label{t52}
Let $\alpha\in\r$. Let $P_1$ and $P_2$ be two given points contained in $\mathcal{C}^+$. If $P_1$ and $P_2$ are collinear with $0$ and $0\not\in S_{12}$, then the maximizer of the energy $E_\alpha$ is the segment $S_{12}$.
\end{theorem}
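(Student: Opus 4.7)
The plan is to normalize the configuration by a linear isometry, express any competitor in hyperbolic polar coordinates, and then reduce everything to a one-dimensional comparison in the radial variable $\rho$.

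First, I would apply a linear isometry of $\l^2$, which preserves spacelike-ness, preserves the sets $\mathcal{C}^{\pm}$, and preserves $E_\alpha$, to move $P_1,P_2$ onto the positive $x$-axis. Since they are collinear with $0$, contained in $\mathcal{C}^+$ and with $0\notin S_{12}$, they both lie in a single component of $\mathcal{C}^+$, say $\{x>|y|\}$; a Lorentz boost then rotates that ray to the positive $x$-axis so that $P_1=(\rho_1,0)$, $P_2=(\rho_2,0)$ with $0<\rho_1<\rho_2$. The segment $S_{12}$ is the arc-length parametrized spacelike curve $s\mapsto (\rho_1+s,0)$, and a direct computation gives
$$E_\alpha[S_{12}]=\int_{\rho_1}^{\rho_2}\tau^\alpha\, d\tau.$$

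Next, let $\gamma\colon[a,b]\to\l^2\setminus\mathcal{C}$ be any spacelike competitor joining $P_1$ to $P_2$. Since $\gamma$ is connected, avoids $\mathcal{C}$, and has both endpoints in the component $\{x>|y|\}$ of $\mathcal{C}^+$, it stays in that component, and hyperbolic polar coordinates provide a smooth representation $\gamma(u)=\rho(u)\bigl(\cosh\phi(u),\sinh\phi(u)\bigr)$ with $\rho>0$, $\phi\in\r$, satisfying $\rho(a)=\rho_1$, $\rho(b)=\rho_2$, $\phi(a)=\phi(b)=0$. The computation already performed in Sect. \ref{s3} yields
$$\langle\gamma',\gamma'\rangle=\rho'^2-\rho^2\phi'^2,\qquad |\langle\gamma,\gamma\rangle|^{\alpha/2}=\rho^\alpha,$$
so
$$E_\alpha[\gamma]=\int_a^b \rho^\alpha\sqrt{\rho'^2-\rho^2\phi'^2}\, du.$$

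Now I would use the spacelike condition $\rho'^2-\rho^2\phi'^2>0$, which forces $\rho'(u)\neq 0$ for every $u\in[a,b]$; hence $\rho$ is strictly monotone, and the boundary values $\rho_1<\rho_2$ make it strictly increasing. The pointwise bound $\sqrt{\rho'^2-\rho^2\phi'^2}\leq \rho'$ followed by the change of variables $\tau=\rho(u)$ then yields
$$E_\alpha[\gamma]\leq \int_a^b \rho^\alpha\rho'\, du=\int_{\rho_1}^{\rho_2}\tau^\alpha\, d\tau=E_\alpha[S_{12}],$$
with equality if and only if $\phi'\equiv 0$, i.e., $\gamma$ is a reparametrization of $S_{12}$. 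This proves the theorem for every $\alpha\in\r$.

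I do not foresee a substantive obstacle; the Lorentzian-specific input is simply the observation that the spacelike condition rigidly forces $\rho$ to be monotone, which is what kills the angular contribution. The only point worth double-checking is uniformity in $\alpha$: the change of variables $\int_a^b \rho^\alpha\rho'\, du=\int_{\rho_1}^{\rho_2}\tau^\alpha\, d\tau$ is valid for every $\alpha\in\r$ (the case $\alpha=-1$ giving $\log(\rho_2/\rho_1)$ on both sides), so no case distinction is needed.
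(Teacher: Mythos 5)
Your proof is correct and follows essentially the same route as the paper: normalize so that $P_1,P_2$ lie on the positive $x$-axis, write the energy in hyperbolic polar coordinates, and bound $\sqrt{\rho'^2-\rho^2\theta'^2}$ by $\rho'$ to reduce to the radial integral. You in fact justify a step the paper leaves implicit — that the spacelike condition forces $\rho'\neq 0$, hence $\rho'>0$ given the boundary values, which is needed for the inequality $\sqrt{\rho'^2-\rho^2\theta'^2}\leq\rho'$ (rather than just $\leq|\rho'|$) — and you add the equality characterization $\theta'\equiv 0$.
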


\begin{proof} Without loss of generality, we may assume that $P_1$ and $P_2$ lie in the component of $\mathcal{C}^+$ included in $x>0$. We use hyperbolic polar coordinates, $(\rho,\theta)\equiv \rho(\cosh\theta,\sinh\theta)$. For a spacelike curve $\gamma(s)=(\rho(s),\theta(s))$ contained in $\mathcal{C}^+$, we have 
$E_\alpha[\gamma]=\int_a^b\rho^{\alpha}\sqrt{\rho'^2-\rho^2\theta'^2}\, ds$. 
 
Up to a linear isometry, a dilation of $\l^2$ and renaming the points, we can assume $P_1=(1,0)$ and $P_2=(r,0)$  with $r>1$. First, we find the value of $E_\alpha[S_{12}]$. We parametrize $S_{12}$ in hyperbolic polar coordinates  by $s\mapsto (\rho(s),\theta(s))$, where $\rho(s)=s$, $\theta(s)=0$ and $s\in [1,r]$.   Then
$$E_\alpha[S_{12}]=\int_1^r s^\alpha\, ds=\left\{\begin{array}{ll}r^{\alpha+1}-1,&\alpha\not=-1\\ \log(r)&\alpha=-1.\end{array}\right.$$
Let $\gamma$ be any spacelike curve joining $P_1$ and $P_2$, $s\in [a,b]$. Using $\theta'^2\geq 0$, we have   
\begin{equation*}
E_\alpha[\gamma]=\int_a^b\rho^\alpha\sqrt{\rho'^2-\rho^2\theta'^2}\, ds\leq \int_a^b\rho^\alpha \rho' \, ds=
E_\alpha[S_{12}].
\end{equation*}

\end{proof}

We now consider the case that $P_1$ and $P_2$ are collinear with $0$ and $0\in S_{12}$.  

\begin{theorem}  
Let $\alpha\in\r$. Let $P_1$ and $P_2$ be two given points contained in $\mathcal{C}^+$ which are collinear with $0$ and $0\in S_{12}$. 
\begin{enumerate}
\item If $\alpha>-1$, the maximizer of the energy $E_\alpha$ is the segment $S_{12}$.
\item If $\alpha\leq -1$, there is not a maximizer of the energy $E_\alpha$ joining both points.
\end{enumerate}
\end{theorem}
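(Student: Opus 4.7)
The plan is to reformulate the energy in the null coordinates $u=y-x$, $v=y+x$. In these coordinates $\langle (x,y),(x,y)\rangle=-uv$ and the spacelike condition becomes $u'v'<0$ pointwise along $\gamma$; in particular $u$ and $v$ are each strictly monotonic along any spacelike competitor. After a linear isometry and a dilation I may take $P_1=(1,0)$ and $P_2=(-r,0)$, which in null coordinates are $(-1,1)$ and $(r,-r)$. Hence along any spacelike $\gamma$ joining $P_1$ to $P_2$ the coordinate $u$ is strictly increasing from $-1$ to $r$ and $v$ strictly decreasing from $1$ to $-r$. Parametrizing by $u$ the energy then reads
\[
E_\alpha[\gamma]=\int_{-1}^{r}|u|^{\alpha/2}\,|v(u)|^{\alpha/2}\sqrt{-v'(u)}\;du,
\qquad v(-1)=1,\ v(r)=-r,\ v'<0.
\]

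\textbf{Proof of (1).} For $\alpha>-1$ I apply the Cauchy-Schwarz inequality with $f(u)=|u|^{\alpha/2}$ and $g(u)=|v(u)|^{\alpha/2}\sqrt{-v'(u)}$:
\[
E_\alpha[\gamma]^{2}\;\le\;\Bigl(\int_{-1}^{r}|u|^\alpha\,du\Bigr)\Bigl(\int_{-1}^{r}|v(u)|^\alpha(-v'(u))\,du\Bigr).
\]
Both factors are finite precisely because $\alpha>-1$ makes $|\cdot|^\alpha$ locally integrable at $0$. A direct computation gives $\int_{-1}^{r}|u|^\alpha\,du=(1+r^{\alpha+1})/(\alpha+1)$, while the substitution $w=v(u)$, which is a bijection $[-1,r]\to[-r,1]$ by strict monotonicity, turns the second factor into $\int_{-r}^{1}|w|^\alpha\,dw=(1+r^{\alpha+1})/(\alpha+1)$. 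Combining these evaluations yields $E_\alpha[\gamma]\le(1+r^{\alpha+1})/(\alpha+1)=E_\alpha[S_{12}]$. Equality in Cauchy-Schwarz forces $|v(u)|^\alpha(-v'(u))=C|u|^\alpha$; equating the two integrals above pins down $C=1$, and then the ODE $|v|^\alpha\,dv=-|u|^\alpha\,du$ together with $v(-1)=1$ yields $|v|^{\alpha+1}\mathrm{sgn}(v)=-|u|^{\alpha+1}\mathrm{sgn}(u)$, whose unique solution for $\alpha>-1$ is $v(u)=-u$, i.e. $\gamma=S_{12}$.

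\textbf{Proof of (2).} For $\alpha\le-1$ I exhibit a sequence of admissible spacelike curves $\gamma_n$ with $E_\alpha[\gamma_n]\to\infty$. Take $\gamma_n$ to be the piecewise-linear path through the four points $(1,0)$, $(1/n,1/(2n))$, $(-1/n,1/(2n))$ and $(-r,0)$; each leg has absolute slope strictly less than $1$, so $\gamma_n$ is spacelike. On the first leg, setting $a=1-1/n$ and $b=1/(2n)$, one has $|\gamma(t)|^{2}=(1-at)^{2}-(bt)^{2}\le(1-at)^{2}$, whence, since $\alpha<0$,
\[
E_\alpha[\gamma_n]\;\ge\;\sqrt{a^{2}-b^{2}}\int_{0}^{1}(1-at)^\alpha\,dt.
\]
An elementary evaluation shows that this integral equals $(1-n^{-\alpha-1})/(a(\alpha+1))$ for $\alpha\ne-1$ and $\log(n)/a$ for $\alpha=-1$; in both cases it tends to $+\infty$ as $n\to\infty$. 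Therefore $\sup E_\alpha=+\infty$ and no maximizer exists.

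The main obstacle is controlling arbitrary spacelike competitors in part (1), where the curve may cross the lightlike cone at isolated points inside the open segment $(P_1,P_2)$. It is precisely the strict monotonicity of $u$ and $v$, a pointwise consequence of the spacelike condition $u'v'<0$, that makes both the parametrization by $u$ and the bijective change of variable $w=v(u)$ globally meaningful, so that a single Cauchy-Schwarz inequality bounds every admissible curve uniformly.
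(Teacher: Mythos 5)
Your proposal is correct, and it takes a genuinely different route from the paper. For part (1) the paper works in hyperbolic polar coordinates, splits the competitor at an intermediate point and applies the monotonicity estimate of the preceding theorem (the one with $0\notin S_{12}$) to each half, concluding $E_\alpha[\gamma]\le E_\alpha[S_{10}]+E_\alpha[S_{02}]=E_\alpha[S_{12}]$; for part (2) it merely observes that $E_\alpha[S_{12}]=\infty$. Your null-coordinate reformulation $ds^2=-du\,dv$, $\langle\gamma,\gamma\rangle=-uv$, together with the pointwise consequence $u'v'<0$ of spacelikeness, turns the whole of part (1) into a single Cauchy--Schwarz inequality whose two factors both evaluate to $E_\alpha[S_{12}]$; this is cleaner, avoids the splitting, and --- unlike the paper's argument --- delivers the equality case, i.e.\ that the segment is the \emph{unique} maximizer. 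For part (2) your explicit maximizing sequence $\gamma_n$ is more informative than the paper's one-liner, since it shows the supremum is infinite even among competitors avoiding the origin. One caveat you share with the paper rather than resolve: when $\alpha\le -2$, every spacelike competitor must cross each of the null lines $u=0$ and $v=0$ transversally (both coordinates change sign and have nonvanishing derivative), so the integrand behaves like $|s-s_0|^{\alpha/2}$ there and \emph{every} competitor, including your $\gamma_n$, already has infinite energy; the ``non-existence of a maximizer'' is then a statement about the supremum being $+\infty$ rather than about a genuine maximizing sequence, exactly as in the paper's argument, so this is an interpretive issue of the theorem's formulation and not a gap in your proof.
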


\begin{proof} Without loss of generality, we suppose that in hyperbolic polar coordinates, $P_1=(1,0)$ and $P_2=(-r,0)$, $r>0$. 
\begin{enumerate}
\item Case $\alpha>-1$. Let $\gamma=\gamma(s)$, $s\in  [a,b]$, be any spacelike curve joining $P_1$ and $P_2$. Choose a point $Q=\gamma(c)$ such that the lengths of  $\gamma_{|[a,c]}$  and $\gamma_{|[c,b]}$ are both less than those of $S_{10}$ and $S_{02}$, respectively, where $S_{10}$ and $S_{02}$ are the segments joining $P_1$ and $0$ and $0$ and $P_2$.  Then $E_\alpha[\gamma_{|_{[a,c]}}]\leq E_\alpha[S_{10}]$ and $E_\alpha[\gamma_{|_{[c,b]}}]\leq E_\alpha[S_{02}]$. Notice that $E_\alpha[S_{10}]$ and $E_\alpha[S_{02}]$ are finite because $\alpha>-1$. Thus $E_\alpha[\gamma]\leq  E_\alpha[S_{10}]+E_\alpha[S_{02}]=E_\alpha[S_{12}]$.
\item Case $\alpha\leq -1$. It is immediate that $E_\alpha[S_{12}]=\infty$, proving the non-existence of maximizers.
\end{enumerate}
  
  \end{proof}
\section*{Acknowledgement}

 Rafael L\'opez   has been partially supported by MINECO/ MICINN/FEDER grant no. PID2023-150727NB-I00, and by the ``Mar\'{\i}a de Maeztu'' Excellence Unit IMAG, reference CEX2020-001105- M, funded by MCINN/AEI/ 10.13039/501100011033/ CEX2020-001105-M.


\end{document}